\documentclass[a4paper,11pt]{amsart}
\usepackage[hmargin={2truecm,2truecm},vmargin={3truecm,3truecm}]{geometry}

\usepackage{lineno,hyperref}
\usepackage{multirow}
\usepackage{float}
\usepackage{setspace}
\usepackage{multicol}
\onehalfspacing

\usepackage{amsmath,amssymb,amsthm}
\usepackage{amsfonts}

\usepackage{color}


\def\R{\mathbb{R}}

\def\N{\mathbb{N}}
\def\F{\mathcal{F}}
\def\T{\mathcal{T}}

\def\d{\mathrm{d}}
\def\cl{\mathrm{cl}}
\def\supp{\mathrm{supp}}

\usepackage[ruled,vlined]{algorithm2e}

\newcommand{\dsum}{\displaystyle\sum}

\newtheorem{definition}{Definition}
\newtheorem{remark}{Remark}
\newtheorem{theorem}{Theorem}
\newtheorem{corollary}{Corollary}
\newtheorem{proposition}{Proposition}
\newtheorem{example}{Example}%

\usepackage{pgfplots}
\usepackage{tikz}
\pgfplotsset{compat=1.9}

\definecolor{armygreen}{rgb}{0.19, 0.49, 0.33}



\let\origmaketitle\maketitle
\def\maketitle{
  \begingroup
  \def\uppercasenonmath##1{} 
  \let\MakeUppercase\relax 
  \origmaketitle
  \endgroup
}
\begin{document}

\title[On  the  Fuzzy MCLP]{\large{On the fuzzy maximal covering location problem}}

\author{Manuel Arana-Jim\'enez}
\address{Dpt. Statistics \& OR, Universidad de C\'adiz}
\email{manuel.arana@uca.es}

\author{V\'ictor Blanco}
\address{IEMath-GR, Universidad de Granada}
\email{vblanco@ugr.es}

\author{Elena Fern\'andez}
\address{Dpt. Statistics \& OR, Universidad de C\'adiz}
\email{elena.fernandez@uca.es}

\date{\today}

\begin{abstract}
In this paper studies the maximal covering location problem, assuming imprecise knowledge of all data involved. The considered problem is modeled from a fuzzy perspective producing suitable fuzzy Pareto solutions. Some properties of the fuzzy model are studied, which validate the equivalent mixed-binary linear multiobjective formulation  proposed.  A solution algorithm is developed, based on the augmented weighted Tchebycheff method, which produces solutions of guaranteed Pareto optimality. The effectiveness of the algorithm has been tested with a series of computational experiments, whose numerical results are presented and analyzed.\\

\noindent{\bf Keywords:} Covering Location, Fuzzy Optimization, Multiobjective Optimization.\\
{\bf MSC 2010}: 	90C70, 	90B50, 	90B80.
\end{abstract}

\maketitle
\section{Introduction}

Covering location models have been extensively studied in the literature. Broadly speaking, in these problems there is a set of users with service demand, which can be satisfied by activating (opening) facilities \emph{sufficiently close} to the demand points. In particular, a given demand user will be served (covered), and its demand \emph{captured} if it is located within the coverage radius of some activated facility; that is, if its distance to some open facility does not exceed a given radius.
 In particular, if a set of facilities $S$ is opened, the demand of user $i$, $w_i$,  will be captured if an only if $d_{ij}<R_j$ for some $j\in S$, where $d_{ij}$ denotes denotes the distance from user $i$ to facility $j$ and $R_j$ is a given coverage radius, which may be different for each potential facility.	 
 The two seminal models in this area are the minimum-cover location problem and the maximal covering location problem. In the minimum-cover location problem, introduced by Toregas and ReVelle \cite{toregas71}, the objective is to find a set of facilities that covers all the demand points at minimum cardinality.  Church and ReVelle \cite{church74} proposed the maximal covering location problem (MCLP), in which there is a set-up cost for each activated facility and a budget that limits the overall set-up cost that can be incurred. Such a budget constraint reduces to a cardinality constraint if all the set-up costs are equal. The objective is to find a set of facilities that maximizes the total covered demand.\\
 Applications of covering location models arise in multiple fields and include the location of health care or emergency services, where successful service strongly depends on the distance from facilities to demand points, the location of signal-transmission facilities (TV, radio, cell-phone antennas, etc.),  where coverage is only achieved within a certain  distance from the facility, or the location of retail facilities, where the attractiveness of a facility for a potential customer clearly depends on its distance from the customer location.\\
The diversity of applications and the theoretical interest of the underlying optimization models have stimulated active research on the area in the last decades.
The interested reader is addressed to  \cite{BermanetalCOR2010} and references therein for an inspiring overview on the topic.

\emph{Coverage} and \emph{coverage radius} are two concepts inherent to covering location, which are typically subjected to several modeling assumptions.
 One of the main assumptions is that a demand user is either fully covered 
 or not covered at all. Another classical assumption is that the coverage radius that determines whether or not a demand user is covered is known. 
 However, as discussed in \cite{BermanetalCOR2010}, in many applications these assumptions may be unrealistic.
 Examples that illustrate this weakness are, for instance, the location of health care or emergency services, or the location of signal transmission facilities.
 This has motivated the study of extensions of classical models, more flexible with respect to the meaning and role of \emph{coverage} and \emph{coverage radius}.
For instance, gradual covering models mitigate the above concern by extending the {\em all-or-nothing} coverage assumption to the {\em gradual coverage}  assumption,  which is modeled by associating the coverage level of demand points with their distance to open facilities \cite{BermanetalCOR2010,BermanKrass,ChurchRoberts}.
Specifically, the captured demand of a given user is computed as $w f(d)$, where $w$ denotes the demand of the user,  $d$ its distance to the closest open facility, and $f(d)\in[0, 1]$ is a non-increasing {\em coverage function} such that $f(d)=1$ for $d\leq \delta^-$, $f(d)=0$ for $d>\delta^+$, where $0<\delta^-\le \delta^+$ are two given parameters. That is, all its demand will be served if the user is at distance at most $\delta^-$ from some open facility, and none of its demand will be served if its distance to all open facilities is greater than $\delta^+$. Otherwise a fraction of $w$ will be served, which decreases as the distance $d$ increases. Note that the MCLP is a particular case of a gradual coverage model where $\delta^-=\delta^+$ for all the users.

 A concrete aspect that contributes to make further questionable the applicability of the modeling assumptions discussed above, is that covering location models often suffer from uncertain knowledge or lack of precision on the data that define specific instances. Note that, in addition to the coverage concept and the coverage radius already mentioned, information related to users' demand, set-up costs or budgets may also be imprecise. Stochastic approaches can be suitable when uncertainty can be modeled by means of a probability distribution or a set of scenarios (see, e.g., \cite{lorena}), although it is not appropriate when the lack of precision  stems from different sources, or when the decision-maker only has an idea concerning the range for the parameters' values and a kind of belief that some values are more likely to occur than some others.
 In such cases, a fuzzy perspective seems particularly suitable for modeling the MCLP. This is the approach that we follow in this paper where we propose a fuzzy mixed-binary linear programming model to deal with the MCLP as well as a solution framework for it.

Since the seminal works in the area \cite{Zadeh,Zimmermann} fuzzy mathematical programming has been applied to address different types of optimization problems with possibilistic uncertain data, as an alternative to \emph{crisp} models where precise knowledge of data is assumed. Modeling alternatives for dealing with fuzzy entities in mathematical programming models were already discussed in \cite{Inuiguchi1990}. Approaches for handling models with integer or binary variables have also been studied (see, e.g. \cite{HerreraVerdegayEJOR,HerreraVerdegayBoolean}), including specific frameworks for some fuzzy programmes \cite{arana19}, as well as complexity results \cite{BlancoPuerto}.
In fact, the MCLP has already been studied from a fuzzy perspective. In \cite{guzman16} the authors  assume \emph{flexibility} as for the coverage, which is modeled by means of fuzzy constraints, although it is assumed that the remaining input data are precisely known.
The authors then apply a parametric approach to transform the fuzzy model into a series of parametric crisp models, which are solved using an iterated local search heuristic. In \cite{Drakulic16} the authors propose a Particle Swarm Optimization scheme to solve the MCLP when fuzzy distances and radii are considered in the problem. A different fuzzy framework for the MCLP is provided in \cite{Davari11}, in which a measure of the covered demand is maximized when the distances between the users and potential facilities are treated as fuzzy events.

In this paper we study a general model for the MCLP, assuming that imprecise knowledge is not restricted to some of the parameters and constraints, but affects to all data, namely users' demand, distances, and coverage radius, as well as to all the data referring to the budget limitations. This means that all the involved entities will be fuzzy, including parameters, constraints and the objective function. For dealing with this general MCLP we propose a novel fuzzy programme in which the decision variables that represent the coverage of users are also modeled as fuzzy numbers. Nevertheless, we will see that it is enough to consider the crisp counterpart of such variables. Following the methodology used with other problems dealing with fuzzy objectives \cite{arana19,BlancoPuerto,HerreraVerdegayEJOR}, for solving our model we operate on an equivalent mixed-binary linear multiobjective formulation. In search of {\em compromise solutions} for that problem, i.e., Pareto solutions whose objective values are as close as possible to the ideal point,
we propose a solution algorithm based on the augmented weighted Tchebycheff method, which produces solutions of guaranteed Pareto optimality  (see, e.g. \cite{marler-arora04}). For the sake of simplicity, our developments consider triangular fuzzy numbers, although our results can be extended to any fuzzy number with a finite ranking system. Finally, we have carried out  extensive computational experiments in order to study the effectiveness of the proposed solution algorithm in terms of both its computational efficiency and the quality of the solutions that it produces. The obtained results are presented and analyzed.

Summarizing, the main contributions of this paper are the following:
\begin{itemize}
	\item We consider a general version of the MCLP, in the sense that we assume imprecise knowledge affects to all data:  distances, coverage radius, users' demand, data referring to the budget limitations. We model the considered problem as a fuzzy mixed-binary linear programme, where all the involved entities are fuzzy.
\item We transform the considered programme into an equivalent mixed-binary linear multiobjective formulation, and we propose an augmented weighted Tchebycheff method for obtaining Pareto solutions for it.
	\item We consider a general budget constraint, where we assume that potential facilities at different locations may have different set-up costs, and there is a budget that limits the overall set-up cost of all the facilities that are opened. To the best of our knowledge, in the literature the budget constraint in the MCLP is modeled as a cardinality constraint on the maximum number of facilities that can be opened. That is, it is assumed that all potential facilities have the same set-up cost.
\item We carry out extensive computational experiments on benchmarking instances to evaluate the effectiveness of the proposed method and to analyze the quality of the obtained solutions. The obtained results indicate that the empirical difficulty for obtaining individual solutions of the fuzzy MCLP coincides with the difficulty of solving the classical MCLP. 
\end{itemize}

The remaining of this paper is structured as follows. With the aim of making the paper self-contained, Section \ref{sec:prelim} recalls some preliminaries of fuzzy sets and fuzzy mixed-binary linear programming that will be used in the paper. Section \ref{sec:prelim} also recalls the definition of the MCLP. Section \ref{sec:FMCLP} formally defines the fuzzy extension of the MCLP that we study. Section \ref{sec:Tcheby} develops the proposed solution algorithm based on the augmented weighted Tchebycheff method, whereas Section \ref{sec:compu} describes the computational experience and presents and analyzes the obtained results. The paper ends in Section \ref{sec:conclu} with some conclusions and comments about promising avenues for future research.


\section{Preliminaries}\label{sec:prelim}

In this section we recall the main notions and results on fuzzy sets and fuzzy integer programming that will be useful for the rest of the paper.

\subsection{Fuzzy Numbers and Nonnegative Triangular Fuzzy Numbers}\label{sec:prelim-fuzzy}

A fuzzy set on $\R^{n}$ is a mapping $\tilde{\mu}:\R^n \rightarrow \lbrack 0,1]$, called \emph{membership function}. Membership functions allow to quantify the degree of truth of the statement ``the element  $x\in\R^n$ belongs to a set $S\subseteq \R^n$''.  If $x$ clearly belongs to the desired set, one will have $\tilde{\mu}(x)=1$, whereas if it clearly does not belong to the set, one will have $\tilde{\mu}(x)=0$. In case the membership of $x$ to $S$ is not sufficiently clear, the partial membership of $x$ to $S$ is modeled by values $0 < \tilde{\mu}(x)<1$, such that the closest they are to one, the clearer it becomes that $x$ belongs to $S$. Fuzzy sets are useful to model uncertainty when it is derived from imprecision. For instance, it is usual to assume that the demand of a user for a certain service is precisely known, but in practice one may have an \textit{imprecise} interval of possible demand values, where some such values are more likely to occur than others. Standard sets, also known as \emph{crisp sets}, are
examples of fuzzy sets, since indicator functions are just a particular case of membership functions.

Any fuzzy set $\tilde{\mu}$ can be characterized by means of the so-called \emph{$\alpha$-cuts}, which are defined as follows:
	\begin{align}
[\tilde{\mu}]^{\alpha}= & \begin{cases}
\cl(\supp(\tilde{\mu})), &  \text{ if $\alpha=0$},\\
\{x \in \R^n: \tilde{\mu}(x) \geq \alpha\},                  & \text{ if $\alpha\in (0, 1]$,}
\end{cases}\nonumber
\end{align}
\noindent where $\supp(\tilde{\mu})=\{x\in \mathbb{R}^{n}$ : $\tilde{\mu}(x)>0\}$, and $\cl(A)$ is the closure of the set $A$. A fuzzy set is \textit{convex} if all its $\alpha$-cuts are convex sets.\\
A special and very useful type of fuzzy sets are fuzzy numbers. A fuzzy set on $\mathbb{R}$, $\tilde{\mu}: \R \rightarrow \lbrack 0, 1]$, is a \textit{fuzzy number} if it is normal ($[\tilde{\mu}]^1 \neq \emptyset$), upper semicontinuous, convex, and $[\tilde{\mu}]^0$ is compact. A fuzzy number $\tilde{\mu}$ is \emph{nonnegative}  if $[\tilde{\mu}]^{\alpha} \subseteq \R_+$ for all $\alpha \in [0,1]$. We will denote by $\F$ and $\F_+$  the family of all fuzzy numbers and nonnegative fuzzy numbers, respectively. Observe that the $\alpha$-cuts of fuzzy numbers are intervals of the form $
[\tilde{\mu}]^{\alpha} = \left[ \underline{{\mu}}_{\alpha }, \overline{{\mu}}_{\alpha }\right]$,
with $\underline{\mu}_{\alpha },\overline{\mu}_{\alpha }\in \mathbb{R}$, and $[\tilde{\mu}]^{\alpha_2}\subseteq [\tilde{\mu}]^{\alpha_1}$, for all $0\le\alpha_1\le\alpha_2\le 1$. Thus $\tilde{\mu}$ is a nonegative fuzzy number if and only if $\underline{{\mu}}_{0 }\geq 0$. Any crisp number $p\in \R$ can be identified with the fuzzy number whose $\alpha$-cuts are given by $\{p\}$. \\

Next we describe how to perform simple arithmetic operations with fuzzy numbers. Given $\tilde{\mu}, \tilde{\nu}\in \F$ the membership function of the sum, product by a scalar $\lambda\in \R$, and multiplication of two fuzzy numbers can be defined as:
\begin{equation*}
(\tilde{\mu}+\tilde{\nu})(z)=\sup_{z=x+y} \min \{\tilde{\mu}(x),\tilde{\nu}(y)\}; \;\; (\lambda \tilde{\mu})(z)=\left\{
	\begin{array}{ll}
		\mu\left( \frac{z}{\lambda }\right) , & \hbox{if}\text{ \ }\lambda \neq 0, \\
		0, & \hbox{if }\text{ \ }\lambda =0;%
	\end{array}%
	\right. \;\; (\tilde{\mu}\cdot\tilde{\nu})(z)=\sup_{z=x\cdot y}\min \{\tilde{\mu}(x),\tilde{\nu}(y)\}.
\end{equation*}

 In terms of $\alpha$-cuts the above operations translate into operations with closed intervals (see, e.g., \cite[Theorem 2.6]{ghaznavi16}). Specifically, for any $\tilde{\mu}, \tilde{\nu}\in \F$, $\lambda \in \R$ and $\alpha \in \lbrack 0,1]$:
\begin{eqnarray}\label{eq: sum general}
\lbrack \tilde{\mu}+ \tilde{\nu}]^{\alpha}&=&\left[ \underline{{\mu}}_{\alpha
}+\underline{{\nu}}_{\alpha }\; ,\;
\overline{{\mu}}_{\alpha
}+\overline{{\nu}}_{\alpha }\right] ,\\
\label{eq: scalar multip general}
\lbrack \lambda \tilde{\mu}]^{\alpha}&=&\left[ \min \{\lambda \underline{{\mu}}%
_{\alpha },\lambda \overline{{\mu}}_{\alpha }\},\max \{\lambda \underline{{\mu}}%
_{\alpha },\lambda \overline{{\mu}}_{\alpha }\}\right],\\ 
\label{eq: multip general}
[\tilde{\mu}\cdot \tilde{\nu}]^\alpha &=& [\mbox{min} \{\underline{{\mu}}_{\alpha}\underline{{\nu}}_{\alpha},\overline{{\mu}}_{\alpha}\overline{{\nu}}, \underline{{\mu}}_{\alpha}\overline{{\nu}}_{\alpha},\overline{{\mu}}_{\alpha}\underline{{\nu}}_{\alpha}\}, \mbox{max} \{ \underline{{\mu}}_{\alpha}\underline{{\nu}}_{\alpha},\overline{{\mu}}_{\alpha}\overline{{\nu}}, \underline{{\mu}}_{\alpha}\overline{{\nu}}_{\alpha}, \overline{{\mu}}_{\alpha}\underline{{\nu}}_{\alpha}  \}].
\end{eqnarray}

In this paper, in addition to performing basic operations, we will compare  fuzzy numbers between them using a (partial) ordering. Several definitions based on interval binary relations (see \cite{guerra12}) can be used to this end. For instance,  the well-known $LU$-fuzzy partial order (see \cite{stefanini-arana18,wu2009b})  is defined as follows. Given $\tilde{\mu}, \tilde{\nu} \in\F$, $\tilde{\mu}$ is \emph{smaller than or equal to} $\tilde{\nu}$ ($\tilde{\mu}\preceq \tilde{\nu}$) if and only if:
		\begin{equation}
		\underline{{\mu}}_{\alpha}\leq \underline{{\nu}}_{\alpha} \mbox{ and } \overline{{\mu}}_{\alpha}\leq \overline{{\nu}}_{\alpha},  \quad \forall \alpha\in [0,1].\label{order}
		\end{equation}


The relationship $\tilde{\mu}$ \emph{greater than or equal to} $\tilde{\nu}$ ($\tilde{\mu}\succeq \tilde{\nu}$) can be introduced in a similar manner. By means of the previous order relationship, we can state that $\tilde{\mu}\in\F_+$ if and only if $\tilde{\mu}\succeq 0$, where, as indicated above, the crisp number $0$ is identified with the fuzzy number whose $\alpha$-cuts are given by $\{0\}$. \\

Many families of fuzzy numbers have been used to model imprecision in different situations, e.g.,  L-R,  triangular, trapezoidal, polygonal, Gaussian, quasi-quadric, exponential, and singleton fuzzy numbers. The reader is referred to \cite{baez12,hanss05,stefanini06} for a  description of some of these families and their properties.
	Even if, in general, fuzzy numbers are characterized by infinitely many $\alpha$-cuts, some of the most popular families of fuzzy numbers can be fully described by means of a finite set of $\alpha$-cuts. In such a case, the fuzzy number is said to have a \textit{finite ranking system} (FRS). This is the case of triangular, trapezoidal or polygonal fuzzy numbers. Furthermore, other more sophisticated fuzzy numbers can be accurately approximated by fuzzy numbers with a FRS \cite{gonzalez-vila91}.\\

In this paper we will use nonnegative  fuzzy numbers with a FRS. Moreover, for the sake of simplicity, we will consider nonnegative triangular fuzzy numbers (TFNs), although all the results that we obtain are extendable to any fuzzy number with a FRS.  TFNs have been widely used because of their easy interpretation (see, for instance, \cite{Dubois78,kaufmann85,khan13,lofti09,stefanini06}), and also because they can be an initial step for more sophisticated modelling approaches~\cite{pedrycz94}. Next we give the definition of a TFN, as well as the particularization to TFNs of the simple operations described above.

 A fuzzy number $\tilde{\mu} \in \F$ is a TFN if there exist $\mu^-, \mu, \mu^+ \in \R$ such that the membership function of $\tilde{\mu}$ is given by
    $$
    \tilde{\mu}(x)=\left\{
    \begin{array}{ll}
    \frac{x-\mu^-}{{\mu}-\mu^-}, &\mbox{ if } \mu^-\le x\le {\mu},\\
    \frac{\mu^+-x}{\tilde{\mu}^-{\mu}}, &\mbox{ if } \mu< x\le \mu^+,\\
    0, & \mbox{otherwise}.
    \end{array}
    \right.
    $$
    Note that  any TFN, $\tilde{\mu}$, is completely identified with the triplet $\tilde{\mu} = (\mu^-,\mu,\mu^+)$ and that the $\alpha$-cuts of a given TFN, $\tilde{\mu}=(\mu^-,{\mu},\mu^+)$ can be easily derived as:
$$
[\tilde{\mu}]^\alpha=[\underline{\mu}_{\alpha }, \overline{\mu}_{\alpha }]=[\mu^-+\alpha({\mu}-\mu^-), \mu^+-\alpha(\mu^+-{\mu})],\quad \text{ for all } \alpha\in[0,1].
$$

 The sets of TFNs  and nonnegative TFNs are respectively denoted by $\T$ and $\T_+$. Observe also that $\T_+ = \{\tilde{\mu}=(\mu^-,\mu,\mu^+) \in \T: \mu^-\geq 0\}$.\\

Figure \ref{fig:tfn} shows the shape of a (nonnegative) TFN (left picture) and one of its $\alpha$-cuts (right picture).

\begin{figure}[h]
\begin{center}
\begin{tikzpicture}[scale=2.5]
\draw[->] (-1,0)--(2.2,0);
\draw[->] (0,-0.2)--(0,1.2);

\node at (-0.1,1) {\tiny $1$};

\draw (-0.05,1)--(0.05,1);
\draw (-0.05,0)--(0.05,0);

\draw[dashed] (1.2,0)--(1.2,1);

\draw[very thick] (-1,0)--(0.5,0);
\draw[very thick] (0.5,0)--(1.2,1);
\draw[very thick] (1.2,1)--(1.7,0);
\draw[very thick] (1.7,0)--(2,0);

\node at (0.5,-0.15) {\tiny$\mu^-$};
\node at (1.2,-0.19) {\tiny$\mu$};
\node at (1.7,-0.15) {\tiny$\mu^+$};
\end{tikzpicture}~\begin{tikzpicture}[scale=2.5]
\draw[->] (-1,0)--(2.2,0);
\draw[->] (0,-0.2)--(0,1.2);

\node at (-0.1,1) {\tiny $1$};
\node at (-0.1,0.7) {\tiny $\alpha$};

\draw (-0.05,1)--(0.05,1);
\draw (-0.05,0)--(0.05,0);

\draw[dotted] (0,0.7)--(2,0.7);
\draw[dashed] (0.9903,0)--(0.9903,0.7);
\draw[dashed] (1.35,0)--(1.35,0.7);

\draw[very thick] (0.9903,0)--(1.35,0);

\draw[thick] (-1,0)--(0.5,0);
\draw[thick] (0.5,0)--(1.2,1);
\draw[thick] (1.2,1)--(1.7,0);
\draw[thick] (1.7,0)--(2,0);

\node at (0.9903,-0.15) {\tiny$\underline{{\mu}}_{\alpha }$};

\node at (1.35,-0.12) {\tiny$\overline{{\mu}}_{\alpha }$};

\node at (0.5,-0.15) {\tiny$\mu^-$};

\node at (1.7,-0.15) {\tiny$\mu^+$};
\end{tikzpicture}
\end{center}
\caption{Shape of the triangular fuzzy number $\tilde{\mu} = (\mu^-,\hat{\mu},\mu^+)$ and one of its $\alpha$-cuts.\label{fig:tfn}}
\end{figure}
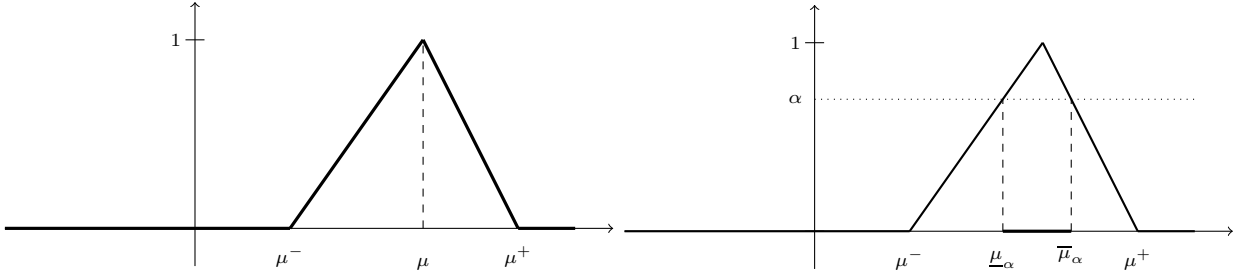

If $\tilde{\mu} = (\mu^-, \mu, \mu^+), \tilde{\nu}=(\nu^-, \nu, \nu^+) \in \T$ and $\lambda \in \R$, operations \eqref{eq: sum general} and \eqref{eq: scalar multip general} above reduce to:
\begin{eqnarray*}
        \tilde{\mu}+\tilde{\nu} & = & (\mu^-+\nu^-, {\mu}+\nu, \mu^+ + \nu^+),\label{Eq:FSum}\\[0,5em]
        \lambda \tilde{\mu}& =& \left\{
        \begin{array}{cl}
	(\lambda \mu^-,\lambda {\mu},\lambda \mu^+) & \mbox{ if }\lambda \ge 0 ,\\
	(\lambda \mu^+,\lambda {\mu},\lambda \mu^-) & \mbox{ if } \lambda  < 0.
	\end{array}
	\right.
\end{eqnarray*}

On the contrary, it is well-known that the general multiplication operator \eqref{eq: multip general} is not suitable for fuzzy numbers with a FRS, and, in particular, for TFNs. Different multiplication rules have been proposed for TFNs (see ~\cite{arana18,kaufmann85,khan13,kumar11}). In the case of nonnegative TFNs all of them coincide in the following simple expression, which we will use through this paper:
\begin{equation*}
    \label{multiplication a b nonnegative}
    \tilde{\mu}\cdot \tilde{\nu}= (\mu^-\cdot \nu^-,{\mu}\cdot{\nu},\mu^+\cdot\nu^+).
\end{equation*}

Finally, the ordering \eqref{order} can also be simplified for TFNs~\cite{arana19}, as indicated below:
\begin{equation}
\mbox{$\tilde{\mu}\preceq\tilde{\nu}$ if and only if $\mu^-\le \nu^-$, ${\mu}\le {\nu}$ and $\mu^+\le \nu^+$.}\label{AranaBlanco}
	\end{equation}

\subsection{Fuzzy Mixed-Binary Programming}\label{sec:FIP}

Fuzzy Integer Linear Programming (FILP) is widely used to address optimization problems involving linear expressions in which some of the variables are restricted to take discrete values, when there is imprecision on the information that determines the problem, and fuzzy numbers become a suitable methodology for modeling them. Depending on the characteristics of the problem and involved data, several alternatives can be used for incorporating fuzzy information within a mathematical programming model, as already discussed in \cite{Inuiguchi1990}. Explicit considerations for FILP have been discussed in \cite{HerreraVerdegayEJOR} and the complexity of these models studied in \cite{BlancoPuerto}. As a generalization of FILP, fuzzy mixed-integer linear programming (FMILP) incorporates fuzzy continuous decision variables to the formulations of the problems. FMILP results in fuzzy mixed-binary linear programming (FMBLP) when all the integer variables are restricted to take binary values. Next we briefly summarize the main concepts and results for FMBLP, which we will apply in our study of the Fuzzy Maximal Covering Location Problem.

We present the more general version of a FMBLP model where we assume that imprecise information affects to all the entities of the problem so there are fuzzy constraints, fuzzy numbers defining the constraints, fuzzy numbers in the objective function, as well as fuzzy continuous variables.  In particular a FMBLP is given by:
\begin{subequations}
\begin{align*}
{\rm (FMBLP)}\qquad \qquad \max \qquad &\;  \tilde{c}y+\tilde{w}\tilde{z}\\ 
\mbox{s.t. } \qquad & \tilde{A}y+\tilde{D}\tilde{z} \preceq \tilde{b},&&\\
& y_j\in\{0, 1\} \quad j\in J,&&\\
& \tilde{z}_{k}\in \F, \quad k\in K,
\end{align*}
\end{subequations}

\noindent where $J=\{1, 2, ..., n\}$, and $K=\{1, 2, ..., h\}$ are given index sets, $\tilde{c}=(\tilde c_{j})_{j\in J}$ and  $\tilde{w}=(\tilde w_{k})_{k\in K}$ row vectors of fuzzy numbers with a FRS, $\tilde{A}_{m\times n}=(\tilde a_{ij})_{i\in I, j\in J}$ and $\tilde{D}_{m\times h}=(\tilde d_{ik})_{i\in I, k\in K}$  matrices of  fuzzy numbers with a FRS, $\tilde b=(\tilde b_i)_{i\in I}$ a column vector of fuzzy numbers with a FRS, and $I=\{1, ...,m\}$.  Observe that $\tilde{A}y+\tilde{D}\tilde{z} \preceq \tilde{b}$ denotes a set of $m$ fuzzy constraints.

Recall that the operations between fuzzy numbers are derived using the rules described in Section \ref{sec:prelim-fuzzy} and that the ``$\max$'' operator refers to maximal solutions with respect to the partial order induced when comparing two feasible fuzzy numbers. Hence, the fuzzy objective  $\tilde{c}y+\tilde{w}\tilde{z}$, determines a fuzzy number with a FRS associated with each feasible solution, and such numbers can be ranked resorting to the solution of equivalent multiobjective optimization problems. Taking into account \eqref{AranaBlanco}, for the case that all parameters are TFNs, FMBLP is equivalent to a three-objective mixed-binary linear problem (see \cite{arana19} for details).

Therefore, usual multiobjective techniques for mixed-binary linear programming, focusing on finding Pareto solutions, can be applied for solving  FMBLP. Let us recall the concept of fuzzy Pareto solution of FMBLP.
\begin{definition}\label{def: fuzzy pareto solution}
A  feasible FMBLP solution $({Y}, {\tilde{Z}})$ is a fuzzy Pareto solution for FMBLP if there does not exist a feasible FMBLP solution $(Y', \tilde{Z'})$ such that ${\displaystyle\tilde{w} \tilde{Z}} \preceq {\displaystyle\tilde{w}\tilde{Z'}}$ and ${\displaystyle\tilde{w}\tilde{Z}}\neq {\displaystyle\tilde{w} \tilde{Z'}}$.
\end{definition}

\subsection{The Maximal Covering Location Problem.}
Next we formally define the maximal covering location problem \cite{BermanetalCOR2010,church74}, which is the crisp optimization problem that we study from a fuzzy perspective in the following sections. Let $I$ and $J$ respectively denote the index sets for the demand points and the potential locations for facilities. For each pair $(i,j)\in I\times J$, $d_{ij}\ge 0$ denotes the distance between demand point $i$ and potential facility $j$. Associated with each demand point $i\in I$ there is a service demand $w_i\ge 0$. Associated with each potential location $j\in J$ there is a set-up cost $f_j$ for activating a facility at location $j$.  There is a budget $B$ for the total set-up cost of all the activated facilities. Each facility $j\in J$, if activated, has a coverage radius $R_j$. This means that if a facility is opened at location $j\in J$ the demand of all points whose distance to $j$ does not exceed $R_j$ will be served. The MCLP is to find a set of facilities whose overall set-up cost does not exceed $B$, that maximizes the overall served demand.\\

The reader may observe that, in fact, the problem that we have defined above is more general than the one that is  usually \emph{labeled} as MCLP (see, e.g., \cite{BermanetalCOR2010}):
\begin{itemize}
\item We use a general budget constraint, which allows for facilities with different set-up costs. Classical models for the MCLP use a cardinality constraint limiting the number of constraints that can be activated. Our general budget constraint clearly reduces to a cardinality constraint when the set-up cost of all the facilities are the same.
\item In its turn, allowing for facilities with different set-up costs further increases the flexibility of the model that we consider as it permits having several candidate facilities placed at the same site, each of them with a different coverage radius and set-up cost.
\end{itemize}

Our mathematical programming formulation for the MCLP uses the same decision variables as for classical models. Specifically, we use the following sets of decision variables:

$$y_j=\left\{
\begin{array}{ll}
1 & \mbox{ if facility $j$ is actived}\\
0 & \mbox{ otherwise,}
\end{array}
\right. \quad z_i=\left\{
\begin{array}{ll}
1 & \mbox{ if demand of node $i$ is covered}\\
0 & \mbox{ otherwise,}
\end{array}
\right.$$

\noindent for all $j\in J$, $i \in I$.

Furthermore, we use we use the notation $J_i$ to represent the set of potential locations that cover the demand point $i\in I$, i.e., $J_{i}=\{j\in J: d_{ij}\le R_j\}$. The MCLP formulation is:

\begin{subequations}
\begin{align}
\mbox{(MCLP)}\, \qquad \max \qquad  & {\displaystyle\sum_{i\in I}w_i z_i} &&\label{OFunc:crisp} \\
\mbox{s.t. } \qquad & {\displaystyle\sum_{j\in J}}f_j y_j\leq B, &&\label{Budget:crisp}\\
&  z_i\leq {\displaystyle\sum_{j\in J_{i}}}y_j,\,\qquad && i\in I,\label{Cover:crisp} \\
&  z_i\in [0,1], \,\qquad && i\in I,\label{domain_z:crisp}\\ 
&  y_j\in\{0,1\},\,\qquad && j\in J. \label{domain_var_y:crisp}
\end{align}
\end{subequations}
Constraint \eqref{Budget:crisp} ensures that the overall set-up cost of the facilities that are opened does not exceed the budget $B$, whereas the set of constraints \eqref{Cover:crisp} imposes that each served demand point is in the coverage radius of some open facility. The objective \eqref{OFunc:crisp} maximizes the overall sum of the demands of the served points. The domain of the variables is defined in \eqref{domain_z:crisp}-\eqref{domain_var_y:crisp}.
Note that variables $z_i$ can been relaxed to their continuous counterpart, since the maximization objective function already guarantees their integrality for optimal solutions provided that the location $y$ variables are binary. 
Formulation \eqref{OFunc:crisp}-\eqref{domain_var_y:crisp} is thus a mixed-binary linear programme.\\

\section{The Fuzzy Maximal Covering Location Problem.}\label{sec:FMCLP}

In this section we introduce the fuzzy maximal covering location problem that we study, present a FMBLP  formulation for  it, and study some of its properties.\\

In the problem that we address, uncertainty affects all parameters, constraints and continuous variables. We assume that all parameters in MCPL are  fuzzy numbers with a FRS. In particular, for each potential facility $j\in J$,  $\tilde{f}_j$ denotes its fuzzy set-up cost and $\tilde R_j$ its fuzzy coverage radius; $\tilde{w}_i \succeq 0$ is the fuzzy demand at node $i$; $\tilde{d}_{ij}\succeq 0$ the fuzzy distance between demand point $i\in I$ and potential facility $j\in J$; and, $\tilde{B}$, the fuzzy budget for the total set-up cost of the activated facilities.
Moreover, now the index set of potential facilities covering the demand point $i\in I$ is defined as $\tilde{J}_{i}=\{j\in J:\tilde{d}_{ij}\preceq\tilde{R}_j\}$.\\

The fuzzy maximal covering location problem FMCLP is to find sets of facilities that do not violate the fuzzy budget constraint, together with a fuzzy number for the coverage of each demand point, that maximizes the overall fuzzy demand. The FMCLP has some well-known particular cases.
\begin{itemize}
	\item The FMCLP extends the MCLP recently studied by Guzm\'an et al. \cite{guzman16}, in which which uncertain values are considered for distances and the coverage radius, but all other parameters, as well as the allocation decision variables $z$ are crisp.
	\item The FMCLP also extends the gradual covering location problem (GCLP) \cite{ChurchRoberts,BermanKrass,BermanetalEJOR2003,BermanetalCOR2010}, also known as \emph{the general gradual cover decay location problem}, which aims at finding a set of $p$ facilities that maximize the total captured demand.\\
We recall that in the GCLP, the captured demand is computed as follows. Two given coverage thresholds, $\delta_i^-$ and $\delta_i^+$, $0 < \delta_i^+ \le \delta_i^-$, are associated with each demand point $i\in I$. Then, the amount of demand of user $i\in I$ that is captured when all the facilities of $S\subseteq J$ are opened is modeled as $w_if_i(\Delta_i(S))$, where $f_i(d)\in[0, 1]$ is a non-increasing {\em coverage level function}, with $f_i(d)=1$ for $d\leq \delta^-_i$, $f_i(d)=0$ for $d>\delta^+_i$, where $0<\delta^-_i\le \delta^+_i$ are two given parameters, and $\Delta_i(S) = \min_{j \in S} d_{ij}$. That is, all the demand of user $i$ will be served if $i$ is at distance at most $\delta^-_i$ from some open facility, and none of its demand will be served if its distance to all open facilities is greater than $\delta^+_i$. Otherwise a fraction of $w_i$ will be served, which decreases as the distance of $i$ to the closest open facility increases.\\ Indeed, any non-increasing coverage level function $f_i(d)\in[0, 1]$ defines a membership function. Thus,  the GCLP is a particular case of the FMCLP, where uncertainty only affects to the objective function and all other entities are crisp. Furthermore, most of the considered coverage functions in the literature, as for instance linear or \textit{step-function coverages}, are actually fuzzy numbers with a FRS.

\end{itemize}

In order to build a FMBLP formulation for the FMCLP we use the same location binary variables $y$ as in  formulation \eqref{OFunc:crisp} -\eqref{domain_var_y:crisp} for the crisp MCLP, plus a set of decision variables $\tilde{z}_i$, modeled as fuzzy numbers, associated with the demand points $i\in I$, with compact support  contained in $[0,1]$ (i.e., $[\tilde{z}_i]^0 \subseteq [0,1]$), where $\tilde{z_i}$ indicates the fuzzy coverage level of demand point $i$. A formulation for the FMCLP is therefore:
\begin{subequations}
\begin{align}
\mbox{(FMCLP)}\qquad\qquad  & \max \;\;\; {\displaystyle\sum_{i\in I}\tilde{w}_i \tilde{z}_i} \qquad\label{FMCLP 1} \\
\qquad \mbox{ s.t.} &\;\; {\displaystyle\sum_{j\in J}}\tilde{f}_j y_j\preceq \tilde{B},\label{FMCLP 2}\\
   & \tilde{z}_i\preceq {\displaystyle\sum_{j\in \tilde{J}_{i}}}y_j,\quad i\in I,\label{FMCLP 3}\\
 & [\tilde{z}_i]^0\subseteq [0,1], \quad i\in I,\label{FMCLP 4}\\
 & y_j\in\{0,1\},\quad j\in J,\label{FMCLP 5}
\end{align}
\end{subequations}
\noindent{where the fuzzy vector objective function ${\sum_{i\in I}\tilde{w}_i \tilde{z}_i}$ is a fuzzy sum of fuzzy multiplications of $\tilde{w}_i$'s and $\tilde{z}_i$'s.
	Similarly to the MCLP, we represent the variable domain by $(y, \tilde{z})\in \{0, 1\}^{|J|}\times \F^{|I|}$. 
	Note that, as mentioned in the previous section, binary vectors $y\in\{0, 1\}^{|J|}$  can be considered in fuzzy partial orders and fuzzy arithmetic in the FMCLP.\\

Below we derive a property of formulation \eqref{FMCLP 1}-\eqref{FMCLP 5}, which is essential to obtain an equivalent formulation for the FMCLP where the fuzzy decision variables $\tilde z$ can be replaced by their crisp counterpart.

\begin{proposition}\label{propo: Fuzzy}
		If $(Y, \tilde Z)$ is a fuzzy Pareto solution of FMCLP, then there exists $Z\in\R^{|I|}$, with $Z_i\in [\tilde Z_i]^0$, for all $i\in I$, such that $(Y, Z)$ is feasible for FMCLP, and $\tilde{w} \tilde{Z} =\tilde{w} {Z}$. 
\end{proposition}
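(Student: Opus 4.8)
The plan is to exploit two structural features of formulation \eqref{FMCLP 1}--\eqref{FMCLP 5}. First, the budget constraint \eqref{FMCLP 2} does not involve the coverage variables at all, so it is insensitive to any replacement of $\tilde Z$. Second, for the fixed binary vector $Y$ the right-hand side of each coverage constraint \eqref{FMCLP 3}, namely $n_i := \sum_{j\in\tilde J_i} Y_j$, is a crisp nonnegative integer. Accordingly I would split the demand set as $I_0=\{i\in I: n_i=0\}$ and $I_1=\{i\in I: n_i\ge 1\}$. For $i\in I_0$ the constraint $\tilde Z_i\preceq n_i=0$, together with $\tilde Z_i\succeq 0$ (which holds because $[\tilde Z_i]^0\subseteq[0,1]$), forces $\tilde Z_i=0$ and hence $[\tilde Z_i]^0=\{0\}$; for $i\in I_1$ the coverage constraint is inactive, since $[\tilde Z_i]^0\subseteq[0,1]$ already yields $\tilde Z_i\preceq 1\preceq n_i$.

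Next I would introduce the crisp comparison solution $Z^*$ with $Z^*_i=1$ for $i\in I_1$ and $Z^*_i=0$ for $i\in I_0$, and check that $(Y,Z^*)$ is feasible for \eqref{FMCLP 1}--\eqref{FMCLP 5} ($Y$ is unchanged, $Z^*_i\le n_i$, and $Z^*_i\in[0,1]$). Using the $\alpha$-cut formula \eqref{eq: multip general} specialized to nonnegative fuzzy numbers, for every $i$ the endpoints of $[\tilde w_i\tilde Z_i]^\alpha$ are bounded above by those of $[\tilde w_i Z^*_i]^\alpha$, because the endpoints of $[\tilde Z_i]^\alpha$ lie in $[0,1]$ and $\tilde w_i\succeq 0$; thus $\tilde w_i\tilde Z_i\preceq \tilde w_i Z^*_i$ for each $i$, and summing via \eqref{eq: sum general} gives $\tilde w\tilde Z\preceq \tilde w Z^*$. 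Since $(Y,Z^*)$ is feasible and $(Y,\tilde Z)$ is a fuzzy Pareto solution, Definition \ref{def: fuzzy pareto solution} forbids strict domination, so in fact $\tilde w\tilde Z=\tilde w Z^*$.

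The decisive step is passing from this \emph{aggregate} equality back to the individual coverage terms. Here I would invoke the elementary principle that if $\tilde a_i\preceq\tilde b_i$ for all $i$ and $\sum_i\tilde a_i=\sum_i\tilde b_i$, then $\tilde a_i=\tilde b_i$ for every $i$: writing both sums in terms of $\alpha$-cut endpoints, termwise inequalities together with equal totals force termwise equalities at every level $\alpha$. Applying this with $\tilde a_i=\tilde w_i\tilde Z_i$ and $\tilde b_i=\tilde w_i Z^*_i$ yields $\tilde w_i\tilde Z_i=\tilde w_i Z^*_i$ for each $i$; in particular $\tilde w_i\tilde Z_i=\tilde w_i$ for all $i\in I_1$.

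Finally I would extract the required crisp vector $Z$. The hard part will be ensuring membership $Z_i\in[\tilde Z_i]^0$ rather than merely matching the objective, since the natural candidate $Z^*_i=1$ need not belong to the support $[\tilde Z_i]^0$. I would handle this by a case split. For $i\in I_1$ with $\tilde w_i\ne 0$ (equivalently $w_i^+>0$), the $\alpha=0$ upper-endpoint of the equality $\tilde w_i\tilde Z_i=\tilde w_i$ reads $w_i^+\,\overline{(Z_i)}_0=w_i^+$, forcing $\overline{(Z_i)}_0=1$; as $[\tilde Z_i]^0=[\underline{(Z_i)}_0,\overline{(Z_i)}_0]$ is a closed interval this gives $1\in[\tilde Z_i]^0$, and I set $Z_i=1$, so that $\tilde w_i Z_i=\tilde w_i=\tilde w_i\tilde Z_i$. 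For $i\in I_1$ with $\tilde w_i=0$ the contribution vanishes for any selection, so I pick any $Z_i\in[\tilde Z_i]^0$; for $i\in I_0$ I set $Z_i=0\in\{0\}=[\tilde Z_i]^0$. In every case $Z_i\in[\tilde Z_i]^0$, the pair $(Y,Z)$ is feasible, and $\tilde w_i Z_i=\tilde w_i\tilde Z_i$, whence summing gives $\tilde w Z=\tilde w\tilde Z$, which is the claim.
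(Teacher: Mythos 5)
Your proof is correct, but it takes a genuinely different route from the paper's. The paper argues coordinate-by-coordinate: for each non-crisp $\tilde Z_k$ it substitutes the crisp upper endpoint $\overline{Z}_{k,0}$ of the support, which lies in $[\tilde Z_k]^0$ \emph{by construction}, shows the objective does not decrease under this replacement, iterates over all non-crisp coordinates, and only then invokes Pareto optimality once to turn $\preceq$ into $=$; the membership requirement $Z_i\in[\tilde Z_i]^0$ is therefore automatic and no further work is needed. You instead compare against the extremal crisp vector $Z^*=\mathbf{1}_{I_1}$, whose components need not belong to the supports, and must then descend from the aggregate equality $\tilde w\tilde Z=\tilde w Z^*$ back to termwise equalities (via the observation that termwise domination plus equal sums forces termwise equality at every $\alpha$-level and endpoint) in order to deduce $\overline{Z}_{i,0}=1$ and hence $1\in[\tilde Z_i]^0$ when $\overline{w}_{i,0}>0$, with a separate case for zero-weight users. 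Your route costs an extra lemma and a case split, but it buys strictly more information: it shows that at any fuzzy Pareto solution the support of $\tilde Z_i$ reaches $1$ for every covered user with positive demand, i.e.\ the solution is essentially at full coverage on $I_1$. Two cosmetic remarks: the statement is made for general fuzzy numbers with a finite ranking system, so you should write $\overline{w}_{i,0}$ rather than the triangular notation $w_i^+$; and note that your $Z^*$ already witnesses the conclusion except for the membership condition, which is exactly the point where the paper's choice of $\overline{Z}_{k,0}$ short-circuits the extra work.
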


\begin{proof}
	If $\tilde Z_i$ is crisp for all $i\in I$, then the result is proved. Otherwise,  we have that $[\tilde Z_k]^0=[\underline{ Z}_{k,0}, \overline{ Z}_{k,0}]$ with $\underline{ Z}_{k,0}< \overline{ Z}_{k,0}$ for some $k\in I$. Consider $(Y,\tilde Z')$, with $\tilde Z'_i=\tilde Z_i$ for all $i\ne k$, and define the fuzzy number $\tilde Z'_k$ with support $[\tilde Z'_k]^0=\{ \overline{ Z}_{k,0}\}$. We have that $\tilde Z'_k$  reduces to the crisp number $\overline{ Z}_{k,0}$. It is not difficult to see that $\tilde Z'_k$ verifies (\ref{FMCLP 3}), and then $(Y,\tilde Z')$ is feasible for FMCLP.\\
	Now, let us check that $\tilde{w} \tilde{Z}\preceq \tilde{w} {\tilde Z'}$. 
	To this end, we apply \eqref{order}
	and compare their $\alpha$-cuts. Thus, given $\alpha\in [0,1]$, from (\ref{eq: multip general}), and taking into account that the variables are non-negative,
	\begin{eqnarray*}
	& [\tilde{w}_k]^{\alpha}\times [\tilde Z_k]^{\alpha} = [\underline{ w}_{k,\alpha}\underline{ Z}_{k,\alpha},\overline{ w}_{k,\alpha}\overline{ Z}_{k,\alpha}],\\
	& [\tilde{w}_k]^{\alpha}\times [\tilde Z'_k]^{\alpha} = [\underline{ w}_{k,\alpha}\overline{ Z}_{k,0},\overline{ w}_{k,\alpha} \overline{Z}_{k,0}].
	\end{eqnarray*}
	Since $\underline{ Z}_{k,\alpha}\le \overline{ Z}_{k,\alpha}\le \overline{ Z}_{k,0}$, it follows that
	$$
	\underline{ w}_{k,\alpha}\underline{ Z}_{k,\alpha}\le
	\underline{ w}_{k,\alpha}\overline{ Z}_{k,0}
	\mbox{ and }
	\overline{ w}_{k,\alpha}\overline{ Z}_{k,\alpha}\le
	\overline{ w}_{k,\alpha} \overline{ Z}_{k,0}.
	$$
	Therefore, since the previous expression is satisfied for any $\alpha\in [0,1]$, we have that
	$$
	\tilde{w}_k \tilde Z_k\preceq 	\tilde{w}_k \tilde Z'_k,
	$$
	and then,
	$$
	\tilde{w} \tilde{Z} = \sum_{i\in I}\tilde{w}_i \tilde{Z}_i=\sum_{i\ne k}\tilde{w}_i \tilde{Z}_i+\tilde{w}_k \tilde Z_k
	\preceq \sum_{i\ne k}\tilde{w}_i \tilde{Z}'_i+\tilde{w}_k \tilde Z'_k =
	\sum_{i\in I}\tilde{w}_i {\tilde Z'}_i= \tilde{w} {\tilde Z'}.
	$$
	We iterate this process on any other index $k'\in I$ such that $\tilde Z_{k'}$ is not crisp. In the end, we get $Z\in\R^{|I|}$, with $Z_i=\tilde Z'_i$, $Z_i\in [\tilde Z_i]^0$, for all $i\in I$, such that $(Y,Z)$ is feasible for FMCLP and
		\begin{equation*}\label{eq: prop 1a}
\tilde{w} \tilde{Z} \preceq \tilde{w} {Z}.
	\end{equation*}
	By hypothesis, $(Y,\tilde Z)$ is fuzzy Pareto, which, combined with (\ref{eq: prop 1a}), implies
	\begin{equation*}
	\tilde{w} \tilde{Z}=\tilde{w} {Z}.
	\end{equation*}
	And the proof is complete.
\end{proof}

Two consequences can be derived from Proposition \ref{propo: Fuzzy}. On the one hand, we can substitute the set of fuzzy variables $\tilde z$ by a set of crisp continuous decision variables, which again will be denoted by $z$. On the other hand, similarly to the formulation for the crisp MCLP counterpart, the crisp allocation variables $z$ can be relaxed to take continuous values, since fuzzy Pareto solutions $(Y,Z)$ will take binary values.\\

In the following, for ease of presentation,  we assume that all the above fuzzy parameters belong to the set $\T$ of triangular fuzzy numbers, although our results easily extend to fuzzy numbers with a FRS. Therefore,

\begin{itemize}
\item $\tilde{f}_j=({f}_j^-,{f}_j,{f}_j^+)$, for all $j\in J$.
\item $\tilde{R}_j=(R^-_j,{R}_j,R^+_j)$, for all $j\in J$.
\item $\tilde{w}_i=(w_i^-,{w}_i,w_i^+)$, for all $i\in I$.
\item $\tilde{d}_{ij}= (d_{ij}^-,{d}_{ij},d_{ij}^+)$, for all $i\in I$, $j\in J$.
\item $\tilde{J}_i = \{ j \in J: d_{ij}^- \leq  R^-_j, d_{ij}\leq R_j, d_{ij}^+ \leq R_j^+\}$, for all $i \in I$.
\end{itemize}

Similarly to \cite{arana19}, we can formalize the relationship between  fuzzy Pareto solutions of FMCLP and Pareto solutions of a multiobjective problem as stated in the following result.

\begin{theorem}\label{Characterizarion FMCLP and MMCLP}
	$(y, \tilde{z})\in \{0, 1\}^{|J|}\times(\T)^{|I|}$ is a fuzzy Pareto solution of FMCLP if and only if $(y, z)\in\{0, 1\}^{|J|}\times\R^{3|I|}$
is a Pareto solution of the following mixed integer multiobjective problem:
\begin{subequations}
	\begin{align}
\mbox{\rm (MMCLP)}\, \qquad \max \qquad & ({\displaystyle\sum_{i\in I}w_i^- z_i},{\displaystyle\sum_{i\in I}w_i^{} {z}_i},{\displaystyle\sum_{i\in I}w_i^+ z_i}) && \label{z:MMCLP 1}\\
\mbox{subject to} \qquad & {\displaystyle\sum_{j\in J}}f_j^- y_j\leq B^-, && \label{z:MMCLP 2}\\
&{\displaystyle\sum_{j\in J}} {f}_j y_j\leq B, && \label{z:MMCLP 3}\\
&{\displaystyle\sum_{j\in J}}f_j^+ y_j\leq B^+, &&\label{z:MMCLP 4}\\
& z_i\leq {\displaystyle\sum_{j\in \tilde{J}_{i}}}y_j,\quad && i\in I,\label{z:MMCLP 5}\\
& {z}_i\le 1, \quad && i\in I,\label{z:MMCLP 9}\\
& {z}_i\ge 0, \quad && i\in I,\label{z:MMCLP 8}\\
& y_j\in\{0,1\},\quad && I\in J.\label{z:MMCLP 10}
\end{align}
\end{subequations}
\end{theorem}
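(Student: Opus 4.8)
The plan is to establish the equivalence by reducing everything to the crisp counterpart of the coverage variables and then translating the fuzzy objective and constraints componentwise, using the triangular arithmetic of Section~\ref{sec:prelim-fuzzy} together with the order characterization~\eqref{AranaBlanco}. First I would record the key translation: for a binary $y_j$ and a crisp $z_i$, the products $\tilde{f}_j y_j$ and $\tilde{w}_i z_i$ are again nonnegative TFNs with triples $(f_j^- y_j, f_j y_j, f_j^+ y_j)$ and $(w_i^- z_i, w_i z_i, w_i^+ z_i)$, so that $\sum_{i\in I} \tilde{w}_i z_i = (\sum_{i\in I} w_i^- z_i, \sum_{i\in I} w_i z_i, \sum_{i\in I} w_i^+ z_i)$, which is exactly the triple of objectives in~\eqref{z:MMCLP 1}. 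Likewise, applying~\eqref{AranaBlanco} to the budget constraint $\sum_{j\in J} \tilde{f}_j y_j \preceq \tilde{B}$ yields precisely the three inequalities~\eqref{z:MMCLP 2}--\eqref{z:MMCLP 4}; the definition of $\tilde{J}_i$ unfolds into the three-component form already displayed before the statement; and, for crisp $z_i$, the coverage bound~\eqref{FMCLP 3} and the support condition~\eqref{FMCLP 4} become~\eqref{z:MMCLP 5} and~\eqref{z:MMCLP 8}--\eqref{z:MMCLP 9}. Hence, once the coverage variables are crisp, FMCLP-feasibility and MMCLP-feasibility coincide, and the fuzzy order on objective values coincides with the componentwise order defining Pareto optimality in $\R^3$.

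For the forward implication, suppose $(y, \tilde{z})$ is fuzzy Pareto for FMCLP. By Proposition~\ref{propo: Fuzzy} there is a crisp $z$ with $z_i \in [\tilde{z}_i]^0$, with $(y, z)$ feasible, and with $\tilde{w}\tilde{z} = \tilde{w} z$; by the translation above $(y, z)$ is MMCLP-feasible. If it were not Pareto, some MMCLP-feasible $(y', z')$ would dominate it, i.e.\ $\tilde{w} z \preceq \tilde{w} z'$ with $\tilde{w} z \neq \tilde{w} z'$ once the three components are read back as TFNs. Viewing $z'$ as a crisp fuzzy solution and using $\tilde{w}\tilde{z} = \tilde{w} z$, this contradicts the fuzzy Pareto optimality of $(y, \tilde{z})$.

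For the converse, let $(y, z)$ be Pareto for MMCLP and regard it as the crisp fuzzy solution $(y, \tilde{z})$ with each $\tilde{z}_i = (z_i, z_i, z_i)$, which is FMCLP-feasible by the translation. If $(y, \tilde{z})$ were not fuzzy Pareto, some feasible $(y', \tilde{z}')$ would satisfy $\tilde{w}\tilde{z} \preceq \tilde{w}\tilde{z}'$ with $\tilde{w}\tilde{z} \neq \tilde{w}\tilde{z}'$. Reusing the endpoint-replacement argument from the proof of Proposition~\ref{propo: Fuzzy}, I would replace each non-crisp $\tilde{z}'_k$ by the crisp upper endpoint of its support to obtain $z'$ with $(y', z')$ feasible and $\tilde{w}\tilde{z}' \preceq \tilde{w} z'$; transitivity then gives $\tilde{w}\tilde{z} \preceq \tilde{w} z'$, with the strict part inherited from $\tilde{w}\tilde{z} \neq \tilde{w}\tilde{z}'$. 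Via~\eqref{AranaBlanco} this exhibits $(y', z')$ as an MMCLP-dominator of $(y, z)$, a contradiction.

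The step I expect to be the main obstacle is this converse direction: a candidate fuzzy dominator $(y', \tilde{z}')$ is only assumed feasible, not fuzzy Pareto, so Proposition~\ref{propo: Fuzzy} does not apply verbatim and its endpoint-replacement construction must be carried out by hand, while checking that the strict part of $\tilde{w}\tilde{z} \neq \tilde{w}\tilde{z}'$ indeed survives the passage to $\tilde{w}\tilde{z}' \preceq \tilde{w} z'$. Everything else is a routine unfolding of the triangular arithmetic and of~\eqref{AranaBlanco}, so it is precisely the bookkeeping of these strict inequalities that requires care.
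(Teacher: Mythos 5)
Your proposal is correct, and it follows the route the paper itself intends: the paper gives no explicit proof of Theorem~\ref{Characterizarion FMCLP and MMCLP} (it defers to \cite{arana19}), and the natural argument is precisely your componentwise translation via the triangular arithmetic and \eqref{AranaBlanco}, combined with Proposition~\ref{propo: Fuzzy} for the forward direction. You also correctly identify and resolve the one delicate point the paper glosses over: in the converse direction the candidate dominator is only feasible, not Pareto, so the endpoint-replacement construction from the proof of Proposition~\ref{propo: Fuzzy} must be rerun by hand, with antisymmetry of $\preceq$ ensuring that the strict part of the domination survives.
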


%

For the sake of simplicity, we denote by $\mathcal{D} = \{(y, z) \in \{0,1\}^{|J|}\times [0,1]^{|I|}:  \sum_{j\in J} f_j^- y_j\leq B^-, \sum_{j\in J}{f}_j y_j\leq {B}, \sum_{j\in J}f_j^+ y_j\leq B^+, z_i\leq \sum_{j\in \tilde{J}_{i}} y_j\}$ the feasible domain to MMCLP defined by \eqref{z:MMCLP 2}-\eqref{z:MMCLP 10}.

\begin{remark}
As mentioned, the previous results apply analogously, not only for TFNs but also for fuzzy numbers with a FRS. The only difference is that, if the demand $\widetilde{w}$ admits a FRS with $s$ $\alpha$ cuts, then, the multiobjective problem has $s$ objective functions. The number of constraints also increases if the budget and costs have FRSs with more than $3$ $\alpha$-cuts.
\end{remark}

\begin{example}\label{ex:0}
We illustrate our methodology on the $30$ points instance provided in \cite{ms98}. We consider the same coordinates and demands as in the referenced source, but randomly generated the set-up costs from a uniform distribution $U[a, b]$ with $a=100$ and $b=1000$. The budget was fixed to $B=\frac{a+b}{2}$. The crisp formulation produced a solution with objective value $3470$ in which the set of open facilities is \{2,13,20\}. The solution is drawn in Figure \ref{fig_mcl} (left). We also generated a fuzzy version of the instance by constructing triangular fuzzy numbers for each of the parameters of the model. In all cases the \textit{center} point coincides with the crisp number, but the lower and upper extremes were randomly generated from an interval $\pm 50\%$ with respect to the central point, respectively. The evaluation of the crisp solution on the three-objective formulation is $(2550.88,3470, 4184.95)$ (observe that the crisp solution is not necessarily feasible for the multiobjective formulation because of constraints \eqref{z:MMCLP 5}). Figure \ref{fig_mcl} (center and right) depicts two Pareto solutions of the fuzzy formulation, which  open facilities $\{2,13\}$  and $\{2,20,22\}$, respectively, and whose captured fuzzy demands are the TFNs given by $(2437.80,3290,3970.53)$ and $(2392.47,3250,3971.95)$, respectively.

\begin{figure}[h]
\begin{center}
\fbox{
\begin{tikzpicture}

\definecolor{color0}{rgb}{0.12156862745098,0.466666666666667,0.705882352941177}

\begin{axis}[
hide x axis,
hide y axis,
axis equal,
]
\draw[draw=black,fill=gray, fill opacity=0.1] (axis cs:2.9,3.2) circle (0.5);
\draw[draw=black,fill=gray, fill opacity=0.1] (axis cs:3.4,3) circle (0.5);
\draw[draw=black,fill=gray, fill opacity=0.1] (axis cs:2.5,1.4) circle (0.5);

\draw[draw=gray,fill=gray,opacity=0] (axis cs:2.5,1.4) circle (0.676156874780115);

\addplot [semithick, black, mark=*, mark size=1, mark options={solid}, only marks, forget plot]
table [row sep=\\]{%
2.6	2.5 \\
};
\addplot [semithick, black, mark=*, mark size=1, mark options={solid}, only marks, forget plot]
table [row sep=\\]{%
2.4	3.3 \\
};
\addplot [semithick, black, mark=*, mark size=1, mark options={solid}, only marks, forget plot]
table [row sep=\\]{%
2.9	2.1 \\
};

\addplot [semithick, black, mark=*, mark size=1, mark options={solid}, only marks, forget plot]
table [row sep=\\]{%
1.7	5.3 \\
};

\addplot [semithick, black, mark=*, mark size=1, mark options={solid}, only marks, forget plot]
table [row sep=\\]{%
2.5	6 \\
};
\addplot [semithick, black, mark=*, mark size=1, mark options={solid}, only marks, forget plot]
table [row sep=\\]{%
2.1	2.8 \\
};
\addplot [semithick, black, mark=*, mark size=1, mark options={solid}, only marks, forget plot]
table [row sep=\\]{%
3	5.1 \\
};
\addplot [semithick, black, mark=*, mark size=1, mark options={solid}, only marks, forget plot]
table [row sep=\\]{%
1.9	4.7 \\
};
\addplot [semithick, black, mark=*, mark size=1, mark options={solid}, only marks, forget plot]
table [row sep=\\]{%
1.7	3.3 \\
};
\addplot [semithick, black, mark=*, mark size=1, mark options={solid}, only marks, forget plot]
table [row sep=\\]{%
2.2	4 \\
};

\addplot [semithick, black, mark=*, mark size=1, mark options={solid}, only marks, forget plot]
table [row sep=\\]{%
2.4	4.8 \\
};
\addplot [semithick, black, mark=*, mark size=1, mark options={solid}, only marks, forget plot]
table [row sep=\\]{%
1.7	4.2 \\
};
\addplot [semithick, black, mark=*, mark size=1, mark options={solid}, only marks, forget plot]
table [row sep=\\]{%
6	2.6 \\
};
\addplot [semithick, black, mark=*, mark size=1, mark options={solid}, only marks, forget plot]
table [row sep=\\]{%
1.9	2.1 \\
};
\addplot [semithick, black, mark=*, mark size=1, mark options={solid}, only marks, forget plot]
table [row sep=\\]{%
1	3.2 \\
};
\addplot [semithick, black, mark=*, mark size=1, mark options={solid}, only marks, forget plot]
table [row sep=\\]{%
3.4	5.6 \\
};
\addplot [semithick, black, mark=*, mark size=1, mark options={solid}, only marks, forget plot]
table [row sep=\\]{%
1.2	4.7 \\
};
\addplot [semithick, black, mark=*, mark size=1, mark options={solid}, only marks, forget plot]
table [row sep=\\]{%
1.9	3.8 \\
};
\addplot [semithick, black, mark=*, mark size=1, mark options={solid}, only marks, forget plot]
table [row sep=\\]{%
2.7	4.1 \\
};
\addplot [semithick, black, mark=*, mark size=1, mark options={solid}, only marks, forget plot]
table [row sep=\\]{%
3.2	3.1 \\
};
\addplot [semithick, black, mark=*, mark size=1, mark options={solid}, only marks, forget plot]
table [row sep=\\]{%
2.7	3.6 \\
};
\addplot [semithick, black, mark=*, mark size=1, mark options={solid}, only marks, forget plot]
table [row sep=\\]{%
2.9	2.9 \\
};
\addplot [semithick, black, mark=*, mark size=1, mark options={solid}, only marks, forget plot]
table [row sep=\\]{%
3.2	2.9 \\
};
\addplot [semithick, black, mark=*, mark size=1, mark options={solid}, only marks, forget plot]
table [row sep=\\]{%
2.6	2.5 \\
};
\addplot [semithick, black, mark=*, mark size=1, mark options={solid}, only marks, forget plot]
table [row sep=\\]{%
2.4	3.3 \\
};
\addplot [semithick, black, mark=*, mark size=1, mark options={solid}, only marks, forget plot]
table [row sep=\\]{%
3	3.5 \\
};
\addplot [semithick, black, mark=*, mark size=1, mark options={solid}, only marks, forget plot]
table [row sep=\\]{%
2.9	2.7 \\
};
\addplot [semithick, black, mark=*, mark size=1, mark options={solid}, only marks, forget plot]
table [row sep=\\]{%
2.9	2.1 \\
};
\addplot [semithick, black, mark=*, mark size=1, mark options={solid}, only marks, forget plot]
table [row sep=\\]{%
3.3	2.8 \\
};
\addplot [semithick, black, mark=*, mark size=1, mark options={solid}, only marks, forget plot]
table [row sep=\\]{%
1.7	5.3 \\
};
\addplot [semithick, black, mark=*, mark size=1, mark options={solid}, only marks, forget plot]
table [row sep=\\]{%
2.5	6 \\
};
\addplot [semithick, black, mark=*, mark size=1, mark options={solid}, only marks, forget plot]
table [row sep=\\]{%
2.1	2.8 \\
};
\addplot [semithick, black, mark=*, mark size=1, mark options={solid}, only marks, forget plot]
table [row sep=\\]{%
3	5.1 \\
};
\addplot [semithick, black, mark=*, mark size=1, mark options={solid}, only marks, forget plot]
table [row sep=\\]{%
1.9	4.7 \\
};
\addplot [semithick, black, mark=*, mark size=1, mark options={solid}, only marks, forget plot]
table [row sep=\\]{%
1.7	3.3 \\
};
\addplot [semithick, black, mark=*, mark size=1, mark options={solid}, only marks, forget plot]
table [row sep=\\]{%
2.2	4 \\
};
\addplot [semithick, black, mark=*, mark size=1, mark options={solid}, only marks, forget plot]
table [row sep=\\]{%
2.9	1.2 \\
};
\addplot [semithick, black, mark=*, mark size=1, mark options={solid}, only marks, forget plot]
table [row sep=\\]{%
2.4	4.8 \\
};
\addplot [semithick, black, mark=*, mark size=1, mark options={solid}, only marks, forget plot]
table [row sep=\\]{%
1.7	4.2 \\
};
\addplot [semithick, black, mark=*, mark size=1, mark options={solid}, only marks, forget plot]
table [row sep=\\]{%
6	2.6 \\
};
\addplot [semithick, black, mark=*, mark size=1, mark options={solid}, only marks, forget plot]
table [row sep=\\]{%
1.9	2.1 \\
};
\addplot [semithick, black, mark=*, mark size=1, mark options={solid}, only marks, forget plot]
table [row sep=\\]{%
1	3.2 \\
};
\addplot [semithick, black, mark=*, mark size=1, mark options={solid}, only marks, forget plot]
table [row sep=\\]{%
3.4	5.6 \\
};
\addplot [semithick, black, mark=*, mark size=1, mark options={solid}, only marks, forget plot]
table [row sep=\\]{%
1.2	4.7 \\
};
\addplot [semithick, black, mark=*, mark size=1, mark options={solid}, only marks, forget plot]
table [row sep=\\]{%
1.9	3.8 \\
};
\addplot [semithick, black, mark=*, mark size=1, mark options={solid}, only marks, forget plot]
table [row sep=\\]{%
2.7	4.1 \\
};
\addplot [semithick, gray, mark=*, mark size=1, mark options={solid}, only marks, forget plot]
table [row sep=\\]{%
2.9	1.2 \\
};
\addplot [semithick, gray, mark=*, mark size=1, mark options={solid}, only marks, forget plot]
table [row sep=\\]{%
3.2	3.1 \\
};
\addplot [semithick, gray, mark=*, mark size=1, mark options={solid}, only marks, forget plot]
table [row sep=\\]{%
2.9	3.2 \\
};
\addplot [semithick, gray, mark=*, mark size=1, mark options={solid}, only marks, forget plot]
table [row sep=\\]{%
2.7	3.6 \\
};
\addplot [semithick, gray, mark=*, mark size=1, mark options={solid}, only marks, forget plot]
table [row sep=\\]{%
2.9	2.9 \\
};
\addplot [semithick, gray, mark=*, mark size=1, mark options={solid}, only marks, forget plot]
table [row sep=\\]{%
3.2	2.9 \\
};
\addplot [semithick, gray, mark=*, mark size=1, mark options={solid}, only marks, forget plot]
table [row sep=\\]{%
3	3.5 \\
};
\addplot [semithick, gray, mark=*, mark size=1, mark options={solid}, only marks, forget plot]
table [row sep=\\]{%
3.3	2.8 \\
};
\addplot [semithick, gray, mark=*, mark size=1, mark options={solid}, only marks, forget plot]
table [row sep=\\]{%
3.4	3 \\
};
\addplot [semithick, gray, mark=*, mark size=1, mark options={solid}, only marks, forget plot]
table [row sep=\\]{%
2.5	1.4 \\
};
\addplot [semithick, gray, mark=*, mark size=1, mark options={solid}, only marks, forget plot]
table [row sep=\\]{%
2.9	2.7 \\
};
\end{axis}

\end{tikzpicture}}~\fbox{
\begin{tikzpicture}

\definecolor{color0}{rgb}{0.12156862745098,0.466666666666667,0.705882352941177}

\begin{axis}[
hide x axis,
hide y axis,
axis equal,
]
\draw[draw=black,fill=gray, fill opacity=0.1] (axis cs:2.9,3.2) circle (0.676156874780115);
\draw[draw=black,fill=gray, fill opacity=0.2] (axis cs:2.9,3.2) circle (0.5);
\draw[draw=black,fill=gray, fill opacity=0.3] (axis cs:2.9,3.2) circle (0.442606492813914);
\draw[draw=black,fill=gray, fill opacity=0.1] (axis cs:3.4,3) circle (0.676156874780115);
\draw[draw=black,fill=gray, fill opacity=0.2] (axis cs:3.4,3) circle (0.5);
\draw[draw=black,fill=gray, fill opacity=0.3] (axis cs:3.4,3) circle (0.442606492813914);


\draw[draw=color0,fill=color0,opacity=0] (axis cs:2.5,1.4) circle (0.676156874780115);

\addplot [semithick, black, mark=*, mark size=1, mark options={solid}, only marks, forget plot]
table [row sep=\\]{%
2.6	2.5 \\
};
\addplot [semithick, black, mark=*, mark size=1, mark options={solid}, only marks, forget plot]
table [row sep=\\]{%
2.4	3.3 \\
};

\addplot [semithick, black, mark=*, mark size=1, mark options={solid}, only marks, forget plot]
table [row sep=\\]{%
2.9	2.1 \\
};
\addplot [semithick, black, mark=*, mark size=1, mark options={solid}, only marks, forget plot]
table [row sep=\\]{%
2.1	2.8 \\
};

\addplot [semithick, black, mark=*, mark size=1, mark options={solid}, only marks, forget plot]
table [row sep=\\]{%
1.7	5.3 \\
};

\addplot [semithick, black, mark=*, mark size=1, mark options={solid}, only marks, forget plot]
table [row sep=\\]{%
2.5	6 \\
};

\addplot [semithick, black, mark=*, mark size=1, mark options={solid}, only marks, forget plot]
table [row sep=\\]{%
3	5.1 \\
};
\addplot [semithick, black, mark=*, mark size=1, mark options={solid}, only marks, forget plot]
table [row sep=\\]{%
1.9	4.7 \\
};
\addplot [semithick, black, mark=*, mark size=1, mark options={solid}, only marks, forget plot]
table [row sep=\\]{%
1.7	3.3 \\
};
\addplot [semithick, black, mark=*, mark size=1, mark options={solid}, only marks, forget plot]
table [row sep=\\]{%
2.2	4 \\
};
\addplot [semithick, black, mark=*, mark size=1, mark options={solid}, only marks, forget plot]
table [row sep=\\]{%
2.5	1.4 \\
};
\addplot [semithick, black, mark=*, mark size=1, mark options={solid}, only marks, forget plot]
table [row sep=\\]{%
2.9	1.2 \\
};
\addplot [semithick, black, mark=*, mark size=1, mark options={solid}, only marks, forget plot]
table [row sep=\\]{%
2.4	4.8 \\
};
\addplot [semithick, black, mark=*, mark size=1, mark options={solid}, only marks, forget plot]
table [row sep=\\]{%
1.7	4.2 \\
};
\addplot [semithick, black, mark=*, mark size=1, mark options={solid}, only marks, forget plot]
table [row sep=\\]{%
6	2.6 \\
};
\addplot [semithick, black, mark=*, mark size=1, mark options={solid}, only marks, forget plot]
table [row sep=\\]{%
1.9	2.1 \\
};
\addplot [semithick, black, mark=*, mark size=1, mark options={solid}, only marks, forget plot]
table [row sep=\\]{%
1	3.2 \\
};
\addplot [semithick, black, mark=*, mark size=1, mark options={solid}, only marks, forget plot]
table [row sep=\\]{%
3.4	5.6 \\
};
\addplot [semithick, black, mark=*, mark size=1, mark options={solid}, only marks, forget plot]
table [row sep=\\]{%
1.2	4.7 \\
};
\addplot [semithick, black, mark=*, mark size=1, mark options={solid}, only marks, forget plot]
table [row sep=\\]{%
1.9	3.8 \\
};
\addplot [semithick, black, mark=*, mark size=1, mark options={solid}, only marks, forget plot]
table [row sep=\\]{%
2.7	4.1 \\
};
\addplot [semithick, gray, mark=*, mark size=1, mark options={solid}, only marks, forget plot]
table [row sep=\\]{%
3.4	3 \\
};
\addplot [semithick, gray, mark=*, mark size=1, mark options={solid}, only marks, forget plot]
table [row sep=\\]{%
3.3	2.8 \\
};
\addplot [semithick, gray, mark=*, mark size=1, mark options={solid}, only marks, forget plot]
table [row sep=\\]{%
3.2	3.1 \\
};
\addplot [semithick, gray, mark=*, mark size=1, mark options={solid}, only marks, forget plot]
table [row sep=\\]{%
2.9	3.2 \\
};
\addplot [semithick, gray, mark=*, mark size=1, mark options={solid}, only marks, forget plot]
table [row sep=\\]{%
2.7	3.6 \\
};
\addplot [semithick, gray, mark=*, mark size=1, mark options={solid}, only marks, forget plot]
table [row sep=\\]{%
2.9	2.9 \\
};
\addplot [semithick, gray, mark=*, mark size=1, mark options={solid}, only marks, forget plot]
table [row sep=\\]{%
3.2	2.9 \\
};
\addplot [semithick, gray, mark=*, mark size=1, mark options={solid}, only marks, forget plot]
table [row sep=\\]{%
3	3.5 \\
};
\addplot [semithick, gray, mark=*, mark size=1, mark options={solid}, only marks, forget plot]
table [row sep=\\]{%
2.9	2.7 \\
};
\end{axis}

\end{tikzpicture}}~\fbox{
\begin{tikzpicture}

\definecolor{color0}{rgb}{0.12156862745098,0.466666666666667,0.705882352941177}

\begin{axis}[
hide x axis,
hide y axis,
axis equal,
]
\draw[draw=black,fill=gray, fill opacity=0.1] (axis cs:2.9,3.2) circle (0.676156874780115);
\draw[draw=black,fill=gray, fill opacity=0.2] (axis cs:2.9,3.2) circle (0.5);
\draw[draw=black,fill=gray, fill opacity=0.3] (axis cs:2.9,3.2) circle (0.442606492813914);
\draw[draw=black,fill=gray, fill opacity=0.1] (axis cs:2.5,1.4) circle (0.676156874780115);
\draw[draw=black,fill=gray, fill opacity=0.2] (axis cs:2.5,1.4) circle (0.5);
\draw[draw=black,fill=gray, fill opacity=0.3] (axis cs:2.5,1.4) circle (0.442606492813914);
\draw[draw=black,fill=gray, fill opacity=0.1] (axis cs:2.4,4.8) circle (0.676156874780115);
\draw[draw=black,fill=gray, fill opacity=0.2 ] (axis cs:2.4,4.8) circle (0.5);
\draw[draw=black,fill=gray, fill opacity=0.3  ] (axis cs:2.4,4.8) circle (0.442606492813914);

\addplot [semithick, black, mark=*, mark size=1, mark options={solid}, only marks, forget plot]
table [row sep=\\]{%
2.6	2.5 \\
};
\addplot [semithick, black, mark=*, mark size=1, mark options={solid}, only marks, forget plot]
table [row sep=\\]{%
2.4	3.3 \\
};

\addplot [semithick, black, mark=*, mark size=1, mark options={solid}, only marks, forget plot]
table [row sep=\\]{%
2.9	2.1 \\
};
\addplot [semithick, black, mark=*, mark size=1, mark options={solid}, only marks, forget plot]
table [row sep=\\]{%
3.3	2.8 \\
};
\addplot [semithick, black, mark=*, mark size=1, mark options={solid}, only marks, forget plot]
table [row sep=\\]{%
1.7	5.3 \\
};
\addplot [semithick, black, mark=*, mark size=1, mark options={solid}, only marks, forget plot]
table [row sep=\\]{%
3.4	3 \\
};
\addplot [semithick, black, mark=*, mark size=1, mark options={solid}, only marks, forget plot]
table [row sep=\\]{%
2.5	6 \\
};
\addplot [semithick, black, mark=*, mark size=1, mark options={solid}, only marks, forget plot]
table [row sep=\\]{%
2.1	2.8 \\
};
\addplot [semithick, black, mark=*, mark size=1, mark options={solid}, only marks, forget plot]
table [row sep=\\]{%
3	5.1 \\
};
\addplot [semithick, black, mark=*, mark size=1, mark options={solid}, only marks, forget plot]
table [row sep=\\]{%
1.9	4.7 \\
};
\addplot [semithick, black, mark=*, mark size=1, mark options={solid}, only marks, forget plot]
table [row sep=\\]{%
1.7	3.3 \\
};
\addplot [semithick, black, mark=*, mark size=1, mark options={solid}, only marks, forget plot]
table [row sep=\\]{%
2.2	4 \\
};

\addplot [semithick, black, mark=*, mark size=1, mark options={solid}, only marks, forget plot]
table [row sep=\\]{%
1.7	4.2 \\
};
\addplot [semithick, black, mark=*, mark size=1, mark options={solid}, only marks, forget plot]
table [row sep=\\]{%
6	2.6 \\
};
\addplot [semithick, black, mark=*, mark size=1, mark options={solid}, only marks, forget plot]
table [row sep=\\]{%
1.9	2.1 \\
};
\addplot [semithick, black, mark=*, mark size=1, mark options={solid}, only marks, forget plot]
table [row sep=\\]{%
1	3.2 \\
};
\addplot [semithick, black, mark=*, mark size=1, mark options={solid}, only marks, forget plot]
table [row sep=\\]{%
3.4	5.6 \\
};
\addplot [semithick, black, mark=*, mark size=1, mark options={solid}, only marks, forget plot]
table [row sep=\\]{%
1.2	4.7 \\
};
\addplot [semithick, black, mark=*, mark size=1, mark options={solid}, only marks, forget plot]
table [row sep=\\]{%
1.9	3.8 \\
};
\addplot [semithick, black, mark=*, mark size=1, mark options={solid}, only marks, forget plot]
table [row sep=\\]{%
2.7	4.1 \\
};

\addplot [semithick, gray, mark=*, mark size=1, mark options={solid}, only marks, forget plot]
table [row sep=\\]{%
3.2	3.1 \\
};
\addplot [semithick, gray, mark=*, mark size=1, mark options={solid}, only marks, forget plot]
table [row sep=\\]{%
2.9	3.2 \\
};
\addplot [semithick, gray, mark=*, mark size=1, mark options={solid}, only marks, forget plot]
table [row sep=\\]{%
2.7	3.6 \\
};
\addplot [semithick, gray, mark=*, mark size=1, mark options={solid}, only marks, forget plot]
table [row sep=\\]{%
2.9	2.9 \\
};
\addplot [semithick, gray, mark=*, mark size=1, mark options={solid}, only marks, forget plot]
table [row sep=\\]{%
3.2	2.9 \\
};
\addplot [semithick, gray, mark=*, mark size=1, mark options={solid}, only marks, forget plot]
table [row sep=\\]{%
2.5	1.4 \\
};
\addplot [semithick, gray, mark=*, mark size=1, mark options={solid}, only marks, forget plot]
table [row sep=\\]{%
2.9	1.2 \\
};
\addplot [semithick, gray, mark=*, mark size=1, mark options={solid}, only marks, forget plot]
table [row sep=\\]{%
2.4	4.8 \\
};
\addplot [semithick, gray, mark=*, mark size=1, mark options={solid}, only marks, forget plot]
table [row sep=\\]{%
3	3.5 \\
};
\addplot [semithick, gray, mark=*, mark size=1, mark options={solid}, only marks, forget plot]
table [row sep=\\]{%
2.9	2.7 \\
};
\end{axis}

\end{tikzpicture}}
\end{center}
\caption{Solutions for the crisp (left) and fuzzy (center and right) models for Example \ref{ex:0}.\label{fig_mcl}}
\end{figure}
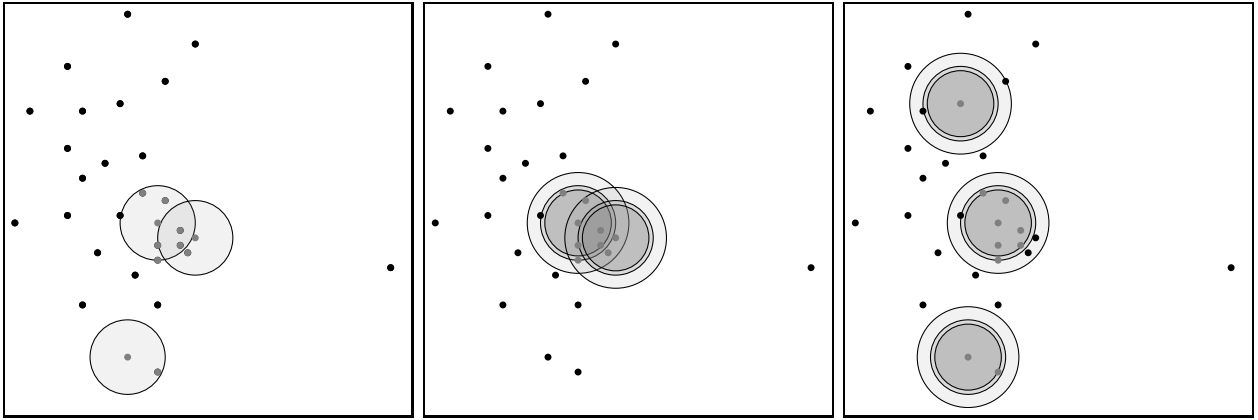

Note that the $\tilde{J}$-sets (users covered by each open facility) are different in the crisp and the fuzzy instances. In fact, the list of users covered by a facility is more restrictive under the fuzzy framework than in the crisp counterpart. This happens because the crisp sets are $J_{i}=\{j\in J: {d}_{ij}\leq {R}_j\}$, while the fuzzy sets are $\widetilde{J}_{i}=\{j\in J: {d}^-_{ij}\leq {R}^-_j, {d}_{ij}\leq {R}_j, {d}^+_{ij}\leq {R}^+_j\}$. Thus, although any feasible solution of the fuzzy problem is also feasible for the crisp problem, the opposite is not true in general.

Figure \ref{fig:ex} represents the overall demands covered in the two Pareto solutions, i.e., the TFNs indicating the \textit{served} demands for both solutions.  The gray dashed TFN is the evaluation of the crisp solution of the instance over the three objective functions of the fuzzy problem. Clearly, the two obtained solutions are incomparable with the induced fuzzy order. Observe that the crisp solution is not feasible for the fuzzy problem and the objective values of the
crisp solution are greater (component-wise) than the fuzzy solutions. The reason is that, as explained before, the fuzzy problem is more restrictive than the crisp one.
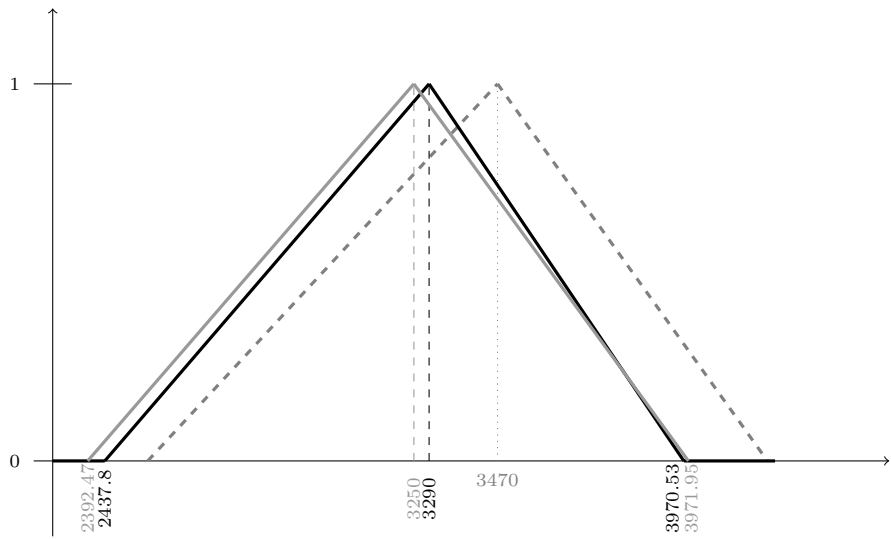
\begin{figure}[h]
\begin{center}
\begin{tikzpicture}[scale=5]
\draw[->] (2.3,0)--(4.5,0);
\draw[->] (2.3,-0.2)--(2.3,1.2);

\node at (2.2,1) {\tiny $1$};
\node at (2.2,0) {\tiny $0$};

\draw (2.25,1)--(2.35,1);
\draw (2.25,0)--(2.35,0);



\draw[dotted, gray] (3.47,0)--(3.47,1);

\draw[very thick, gray, dashed] (2.55,0)--(3.47,1);
\draw[very thick, gray, dashed] (3.47,1)--(4.18,0);
\node[gray] at (3.47,-0.05) {\tiny $3470$};


\draw[dashed] (3.29,0)--(3.29,1);

\draw[very thick] (2.437,0)--(3.29,1);
\draw[very thick] (3.29,1)--(3.96,0);

\draw[very thick] (2.3,0)--(2.437,0);
\draw[very thick] (3.96,0)--(4.2,0);


\draw[dashed, gray!80] (3.25,0)--(3.25,1);

\draw[very thick, gray!80] (2.392,0)--(3.25,1);
\draw[very thick, gray!80] (3.25,1)--(3.972,0);

\node[rotate=90]  at (2.437,-0.1) {\tiny $2437.8$}; 
\node[rotate=90]  at (3.290,-0.1) {\tiny $3290$}; 
\node[rotate=90]  at (3.93,-0.1) {\tiny $3970.53$}; 


\node[rotate=90, gray!80] at (2.392,-0.1) {\tiny $2392.47$};
\node[rotate=90, gray!80]  at (3.25,-0.1) {\tiny $3250$};
\node[rotate=90, gray!80]  at (3.98,-0.1) {\tiny $3971.95$};
\end{tikzpicture}

\end{center}
\caption{Representation of the Pareto solutions obtained in Example \ref{ex:0}.\label{fig:ex}}
\end{figure}
\end{example}

\section{Fuzzy Pareto solutions via augmented weighted Tchebycheff method}\label{sec:Tcheby}

In this section we present a solution method to generate solutions for FMCLP using the multiobjective formulation provided in the previous section. In particular, we look for fuzzy Pareto solutions \emph{close} to the ideal point. Recall that the ideal point is obtained by solving separately a single-objective problem for each of the objective functions of \eqref{z:MMCLP 1}. Specifically, let $(Y^{w^-}, Z^{w^-}), (Y^{w^{}}, Z^{w^{}}), (Y^{w^+}, Z^{w^+})\in \mathcal{D}$ be the solutions to the single-objective problems that consider separately each of the objective functions, $w^-z$, $wz$ and $w^+z$, respectively, and $F_1^I$, $F_2^I$, and $F_3^I$ their objective values. $F^I=(F_1^I,F_2^I,F_3^I)$ is usually called the \textit{ideal point} in the criterion space. In general, the point $F^I$ is not attainable by any feasible solution $(y, z)\in \mathcal{D}$ unless  $(Y^{w{^-}}, Z^{w^-})=(Y^{w^{}}, Z^{w^{}})=(Y^{w^+}, Z^{w^+})$.

\emph{Compromise solutions} consist of selecting, among the whole set of fuzzy Pareto solutions, those whose objective values are as close as possible to the ideal point.  Given a distance measure  in the objective space, $\d:\R^3\rightarrow \R_+$, compromise solutions can be obtained by finding an efficient feasible solution $(y,z)\in\mathcal{D}$, whose objective vector 
$F(z) = (F_1(z), F_2(z), F_3(z)) = (w^- z, w^{}z, w^+ z)$ is at minimum distance to $F^I$; i.e., $\d\Big(F(z),F^I\Big)$, is minimized.
Thus, a compromise solution can be found by solving the following  bilevel Compromise Solution Problem:
\begin{align}
({\rm CSP}_0) \qquad \min \qquad &\;  \d\left( F(z) , F^I\right)\label{csp0}\nonumber\\
\mbox{s.t. } \qquad & (y, z) \in \arg\max \, (w^- z, {w} {z}, w^+ z)&&\nonumber\\
& \qquad (y, z)\in\mathcal{D}.&&\nonumber
\end{align}
${\rm CSP}_0$ is a bilevel mixed-integer programming problem, whose solution is, in general, complex. The following result, whose proof is straightforward, states the equivalence of ${\rm CSP}_0$ and a single-level mixed-integer programming problem.
\begin{proposition}
${\rm CSP}_0$ is equivalent to the following single-level mixed-integer problem:
\begin{align}
({\rm CSP}) \qquad \min \qquad &\; \d\left(F(z) , F^I\right)\nonumber\\
    \mbox{s.t. }\qquad & (y, z)\in\mathcal{D}.\nonumber
\end{align}
\end{proposition}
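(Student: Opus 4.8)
The plan is to exploit the single structural fact that makes the lower-level $\arg\max$ constraint in $({\rm CSP}_0)$ redundant: because $F^I$ is the ideal point, every feasible objective vector satisfies $F(z)\le F^I$ componentwise, so driving $F(z)$ as close as possible to $F^I$ necessarily pushes it onto the efficient frontier. First I would record the trivial inclusion of feasible sets. The feasible region of $({\rm CSP}_0)$ consists of those $(y,z)\in\mathcal{D}$ that are Pareto (efficient) for the vector objective $(w^-z,wz,w^+z)$, and this is a subset of $\mathcal{D}$, the feasible region of $({\rm CSP})$. Since both problems minimise the \emph{same} objective $\d\left(F(z),F^I\right)$, this inclusion immediately gives that the optimal value of $({\rm CSP})$ is at most that of $({\rm CSP}_0)$.

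The core of the argument is the reverse direction: I would show that any optimal solution $(y^\ast,z^\ast)$ of $({\rm CSP})$ is automatically efficient, hence feasible for $({\rm CSP}_0)$. I argue by contradiction. If $(y^\ast,z^\ast)$ were not efficient, there would exist $(y',z')\in\mathcal{D}$ dominating it, i.e. $F(z')\ge F(z^\ast)$ componentwise with at least one strict inequality. Since $F^I$ dominates every feasible point, this yields the chain $F(z^\ast)\le F(z')\le F^I$. The distance $\d$ employed, being strictly decreasing in each coordinate of $F(z)$ on the region below $F^I$ (as is the augmented weighted Tchebycheff distance used in the sequel), then forces $\d\left(F(z'),F^I\right)<\d\left(F(z^\ast),F^I\right)$, contradicting the optimality of $(y^\ast,z^\ast)$ for $({\rm CSP})$. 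Therefore $(y^\ast,z^\ast)$ is efficient and thus feasible for $({\rm CSP}_0)$, so it is optimal for $({\rm CSP}_0)$ as well, and the two optimal values coincide. Combined with the inclusion above, every optimal solution of $({\rm CSP})$ solves $({\rm CSP}_0)$; conversely, any optimiser of $({\rm CSP}_0)$ is feasible for $({\rm CSP})$ and attains the common value, hence solves $({\rm CSP})$.

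The only point that needs care, and the place where the hypotheses genuinely enter, is the \emph{strict monotonicity} of $\d$ with respect to the componentwise order below $F^I$. For the plain (weighted) Tchebycheff max-distance this monotonicity is only weak, so a dominated point could still minimise the distance and the equivalence could fail. The augmentation term, the small $\ell_1$ penalty added to the max-term, is exactly what restores strict monotonicity and therefore the Pareto optimality of every minimiser; this is the classical reason for augmenting the Tchebycheff scalarisation. Once this property of $\d$ is invoked, the remainder of the argument is routine, which is why the statement is labelled as having a straightforward proof.
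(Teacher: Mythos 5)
The paper itself gives no argument here (it dismisses the proof as ``straightforward''), so the comparison is really about whether your reconstruction actually establishes the statement in the generality in which the paper states and then uses it. Your first half is fine and is surely what the authors intend: the feasible set of $({\rm CSP}_0)$ is contained in $\mathcal{D}$, so $\mathrm{opt}({\rm CSP})\le\mathrm{opt}({\rm CSP}_0)$, and the content of the proposition is the reverse inequality, which must come from some monotonicity of $\d(\cdot,F^I)$ with respect to the componentwise order below $F^I$.

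The gap is in how you close the reverse direction. You prove it only under \emph{strict} coordinatewise monotonicity of $\d$, and you yourself concede that for the plain Tchebycheff max-distance the monotonicity is merely weak and ``the equivalence could fail.'' But $({\rm CSP}_\infty)$ is precisely one of the two instantiations of this proposition that the paper makes immediately afterwards, and the augmented Tchebycheff scalarisation you invoke to rescue the argument is \emph{not} the distance $\d$ appearing in $({\rm CSP})$ --- it only enters later, in ${\rm ModMMCLP}$. So as written your proof does not cover a case the proposition is actually applied to. The fix is to argue the reverse inequality differently: you do not need every minimiser of $({\rm CSP})$ to be efficient, only that \emph{some} minimiser is. Since the image $F(\mathcal{D})$ is compact (a finite union of images of polytopes), every feasible point is dominated by a Pareto point (e.g.\ maximise $\sum_r F_r$ over the compact set of feasible points dominating it); weak monotonicity of $\d(\cdot,F^I)$ below $F^I$ then gives a Pareto point whose distance is no larger, hence $\mathrm{opt}({\rm CSP}_0)\le\mathrm{opt}({\rm CSP})$. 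This yields equality of optimal values for any componentwise nonincreasing $\d$, including $\ell_\infty$; the price is that for $\ell_\infty$ the two problems need not have identical optimal-solution sets (a dominated minimiser of $({\rm CSP})$ is only weakly Pareto), which is exactly why the paper later introduces the Pareto optimality test $P(Y,Z)$. Your strict-monotonicity argument remains valid verbatim for $({\rm CSP}_1)$.
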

Several types of distances can be considered for finding compromise solutions, such as those induced by $l_p$-norms, with $p\in\N\cup\{\infty\}$. In this work we use $p \in \{1, \infty\}$ so 
CSP reduces to the following mixed-integer linear programming problems:
\begin{multicols}{2}
\begin{align}
({\rm CSP}_1)\qquad\min & \dsum_{r=1}^3 \Big (F_r^I-F_r(z)\Big)\nonumber\\
\mbox{s.t.}  & \;\;(y, z)\in\mathcal{D},\nonumber
\end{align}~\begin{align}
({\rm CSP}_\infty)\qquad \min & \;\;\; \alpha\nonumber\\
\mbox{s.t.} & \;\; \alpha \geq \Big (F_r^I-F_r(z)\Big), r=1, \ldots, 3,\nonumber\\
& (y, z)\in\mathcal{D},\nonumber
\end{align}
\end{multicols}
\noindent where the absolute values involved in the definition of the $\ell_1$ and the $\ell_\infty$-norms can be avoided since the components of ideal points are always greater than or equal to the objective function value at any feasible solution.

Prior information of preferences can also be useful in the search of compromise solutions. In particular, \emph{a priori} information can be integrated within CSP via, for instance, a weighted min-max formulation also known as the weigthed Tchebycheff method. This method produces solutions that satisfy necessary conditions for Pareto optimality (see Miettinen \cite{miettinen99}), although not necessarily sufficient conditions (see Koski and Silvennoinen \cite{koski87}). This weakness is overcome in the augmented weighted Tchebycheff method. This method minimizes \emph{ad-hoc} objective functions on the original feasible domain and, for discrete problems involving linear constraints (polyhedral problems) like the MMCLP that we address, produces solutions of guaranteed Pareto optimality. Specifically, Pareto solutions can be obtained by optimizing the following objective function (see \cite{marler-arora04} and references therein for details on this method):
\begin{equation*}
H(y, z)=\max_{r\in \{1,2,3\}}\{ \lambda_r(F^{I}_r-F_r(z))  \}+\rho\sum_{r=1}^{3}(F^{I}_r-F_r(z)), \quad \text{for all } (y,z) \in \mathcal{D},
\end{equation*}
\noindent where all weights are strictly positive, with $\rho$ sufficiently small positive scalar assigned by the decision-maker. 

Taking into account the previous considerations, the following modified model is proposed for solving the MMCLP:
\begin{eqnarray*}
\mbox{(ModMMCLP)}\, & \min & \max_{r\in \{1,2,3\}}\{ \lambda_r(F^{I}_r-F_r(z))  \}+\rho\sum_{r=1}^{3}(F^{I}_r-F_r(z))\\
&\mbox{s.t.} \qquad & (y, z)\in\mathcal{D}.
\end{eqnarray*}
Pareto solutions for the MMCLP can thus be obtained by solving ModMMCLP for given strictly positive weights $(\lambda,\rho)$. Suitable simplifications allow to transform ModMMCLP to:
\begin{eqnarray*}
\mbox{(ModMMCLP')}\, & \min & \alpha +\rho\sum_{r=1}^{3}(F^{I}_r-F_r(y, z))\\
&\mbox{s.t. } & \lambda_r(F^{I}_r-F_r(z))\le\alpha,\quad r\in\{1,2,3\},\\
& \qquad & (y, z)\in\mathcal{D},\\\
& \qquad & \alpha\geq 0.
\end{eqnarray*}
\begin{proposition}
	Given a strictly positive coefficients vector $(\lambda,\rho)\in \R^{|I|+1}_+$, if $(Y,Z)\in\mathcal{D}$ is an optimal solution to ModMMCLP', then $(Y,Z)$ is a Pareto solution to FMCLP, and $(Y, \tilde{Z})\in \{0,1\}^{|J|} \times (\T)^{|I|}$ is a fuzzy Pareto solution of FMCLP.
\end{proposition}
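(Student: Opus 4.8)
The plan is to argue in two stages. First I would show that any optimizer of ModMMCLP' is a Pareto solution of the three-objective problem MMCLP; second I would transfer this conclusion to the fuzzy setting by invoking Theorem \ref{Characterizarion FMCLP and MMCLP}. The first stage is the classical Pareto-optimality guarantee of the augmented weighted Tchebycheff scalarization, specialized to our polyhedral feasible set $\mathcal{D}$; the second stage is purely formal once the first is in hand.

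For the first stage I would begin by recording the equivalence of ModMMCLP' with ModMMCLP: the epigraph variable $\alpha$ together with the constraints $\lambda_r(F^I_r-F_r(z))\le\alpha$ forces $\alpha=\max_r\lambda_r(F^I_r-F_r(z))$ at any optimizer (since $\alpha$ carries a positive objective coefficient and is bounded below by each right-hand side), so minimizing the ModMMCLP' objective over $\mathcal{D}$ is the same as minimizing $H(y,z)$ over $\mathcal{D}$. I would then proceed by contradiction: suppose $(Y,Z)$ is not Pareto optimal for MMCLP. Then there is a feasible $(Y',Z')\in\mathcal{D}$ dominating it, i.e. $F_r(Z')\ge F_r(Z)$ for all $r\in\{1,2,3\}$ with at least one strict inequality.

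The decisive point, and the only place where genuine care is needed, is the sign of the residuals $F^I_r-F_r(z)$. As already observed just before the statement of CSP, each component of the ideal point dominates the corresponding objective value at every feasible solution, so $F^I_r-F_r(z)\ge 0$ throughout $\mathcal{D}$; this is precisely what allows the absolute values in the $\ell_1$ and $\ell_\infty$ residuals to be dropped. Given this, domination of $(Y,Z)$ by $(Y',Z')$ yields $0\le F^I_r-F_r(Z')\le F^I_r-F_r(Z)$ for every $r$, with strict inequality for at least one index. Multiplying by the strictly positive $\lambda_r$ and taking the maximum shows the Tchebycheff term does not increase, while the augmentation term $\rho\sum_r(F^I_r-F_r(z))$ strictly decreases because $\rho>0$ and at least one residual drops strictly. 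Hence $H(Y',Z')<H(Y,Z)$, contradicting the optimality of $(Y,Z)$ for ModMMCLP'. This establishes that $(Y,Z)$ is a Pareto solution of MMCLP.

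Finally, since $(Y,Z)\in\mathcal{D}$ is Pareto for MMCLP, Theorem \ref{Characterizarion FMCLP and MMCLP} applies verbatim and delivers that the associated $(Y,\tilde{Z})\in\{0,1\}^{|J|}\times(\T)^{|I|}$ is a fuzzy Pareto solution of FMCLP, which is the remaining claim. I expect no real obstacle in this step: the weights enter only through their strict positivity, and in particular no smallness of $\rho$ is required for this direction. Smallness of $\rho$ would matter only for the converse statement, namely that every Pareto point is recovered for some choice of weights; here all the structural work is carried by the nonnegativity of the residuals together with the strictly decreasing augmentation term.
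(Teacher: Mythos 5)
Your proposal is correct and follows essentially the same route as the paper, which simply states that the result is ``straightforward from Marler and Arora'' --- i.e., the standard Pareto-optimality guarantee of the augmented weighted Tchebycheff scalarization combined with Theorem \ref{Characterizarion FMCLP and MMCLP}. Your writeup merely supplies the details the paper delegates to that citation (the epigraph equivalence, the nonnegativity of the residuals, and the strict decrease of the augmentation term under domination), and all of these steps are sound.
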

\begin{proof}
	The proof is straightforward from \cite{marler-arora04}.
\end{proof}

\vspace*{0.6cm}

\noindent {\bf Pareto Optimality Test}\\

Let us remark that when some component or weight of $(\lambda,\rho)$ is null, the solutions produced by  ModMMCLP' are weakly Pareto, although they are not guaranteed to be Pareto solutions. In particular, note that compromise solutions for both the $\ell_\infty$-norm and the $\ell_1$-norm can be obtained from the sets of optimal solutions to ModMMCLP' for suitable values of the parameters $\lambda$ and $\rho$. In particular, the set of optimal solutions to ModMMCLP' with $(\lambda,\rho)=(1,1,1,0)$  coincides with the $\ell_\infty$-compromise solution set. For $(\lambda,\rho)=(0,0,0,1)$ we obtain the $\ell_1$-compromise solution set, while for  $(\lambda,\rho)\in \{(1,0,0,0), (0,1,0,0), (0,0,1,0) \}$, ModMMCLP' reduces to maximizing independently each objective.

In the above cases, it is possible to check whether or not the obtained solution is Pareto by means of several methods summarized in Miettinen \cite{miettinen99}, and described in Marler and Arora \cite{marler-arora04}. In particular, we have the following simple test for a given weakly Pareto solution $(Y, Z)$:
\begin{eqnarray*}
{P(Y, Z)} \qquad &\max & {\displaystyle\sum_{r=1}^{3}}\delta_r \label{prob: check Pareto}\\
&\mbox{s.t.} & F_r(y, z)-\delta_r = F_r(Y, Z),\quad r\in\{1,2,3\},\label{eq: prop check 2}\\
&& \delta_r\ge 0,\quad r\in\{1,2,3\},\nonumber\\
& \qquad & (y, z)\in\mathcal{D}. \label{prob: check Pareto_domain}
\end{eqnarray*}
If there is an optimal solution to $P(Y, Z)$ with all $\delta_r$'s at value zero, then $(Y, Z)$ is a Pareto solution for MMCPL, and its corresponding fuzzy vector given by $(Y, \tilde{Z})$ is a fuzzy Pareto solution of FMCLP. Furthermore, if $\delta_r>0$ for some $r\in\{1,2,3\}$, we can also generate a a fuzzy Pareto solution for FMCLP from an optimal solution to $P(Y, Z)$  as follows.
\begin{proposition}
	Let  $(Y, Z)$ be a weakly Pareto solution of MMCPL and $(\delta^*, Y^{*}, Z^{*})$ an optimal solution to $P(Y, Z)$. If $\delta^*_r>0$, for some $r\in\{1,2,3\}$, then $(Y^{*}, Z^{*})$ is a Pareto solution for MMCLP.
\end{proposition}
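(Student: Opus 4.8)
The plan is to argue by contradiction, exploiting the optimality of $(\delta^*, Y^*, Z^*)$ in the test problem $P(Y,Z)$. Suppose $(Y^*, Z^*)$ were \emph{not} Pareto for MMCLP. Then there would exist a feasible point $(y', z')\in\mathcal{D}$ dominating it; that is, $F_r(y', z')\ge F_r(Y^*, Z^*)$ for every $r\in\{1,2,3\}$, with strict inequality for at least one index.

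The key observation is that feasibility of $(\delta^*, Y^*, Z^*)$ in $P(Y,Z)$ forces, through the equality constraints, the identity $F_r(Y^*, Z^*) = F_r(Y, Z) + \delta_r^*$ for all $r$. I would then set $\delta_r' := F_r(y', z') - F_r(Y, Z)$ and verify that $(\delta', y', z')$ is again feasible for $P(Y,Z)$: the equality constraints hold by construction, the point $(y', z')$ lies in $\mathcal{D}$ by assumption, and nonnegativity follows since $\delta_r' = F_r(y', z') - F_r(Y,Z) \ge F_r(Y^*, Z^*) - F_r(Y,Z) = \delta_r^* \ge 0$.

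It then remains only to compare objective values. Summing the transferred inequalities yields $\dsum_{r=1}^{3}\delta_r' \ge \dsum_{r=1}^{3}\delta_r^*$, and this inequality is \emph{strict} because the assumed dominance is strict in at least one component. This contradicts the optimality of $(\delta^*, Y^*, Z^*)$ for $P(Y,Z)$, so no dominating $(y', z')$ can exist and $(Y^*, Z^*)$ is Pareto for MMCLP.

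I do not expect a genuine obstacle here: the argument is a short dominance-transfer contradiction, and the only point requiring care is that each constructed step preserves feasibility in $P(Y,Z)$. It is worth remarking that the hypothesis $\delta_r^*>0$ for some $r$ is not actually needed to establish Pareto optimality—any optimal solution of $P(Y,Z)$ is Pareto by the same reasoning. Its role is merely to flag the \emph{interesting} case, in which $(Y^*, Z^*)$ strictly improves upon the merely weakly Pareto point $(Y,Z)$, so that the test has genuinely produced a new (Pareto) solution rather than certifying $(Y,Z)$ itself.
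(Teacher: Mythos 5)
Your proof is correct and follows essentially the same contradiction argument as the paper: a dominating point $(y',z')$ is lifted to a feasible solution of $P(Y,Z)$ with slacks $\delta' = \delta^* + (F(y',z')-F(Y^*,Z^*))$, whose objective strictly exceeds $\sum_r \delta_r^*$, contradicting optimality. Your closing remark that the hypothesis $\delta_r^*>0$ is not actually used is accurate — the paper's proof does not use it either.
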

\begin{proof}
	By definition of $P(Y, Z)$, $(Y^{*}, Z^{*})$ is feasible for MMCLP. Suppose that $(Y^{*}, Z^{*})$ is not a Pareto solution for MMCLP. This implies that there exists $(\bar{Y},\bar{Z})\in\mathcal{D}$ such that $F(\bar{Y},\bar{Z})\geq F(Y^{*}, Z^{*})$ and $F(\bar{Y},\bar{Z})\neq F(Y^{*}, Z^{*})$. That is, there exists $\bar{\delta}=(\bar{\delta}_1, \bar{\delta}_2, \bar{\delta}_3)\geq 0$, with $\bar{\delta}_{r_0}>0$ for some $r_0\in\{1, 2, 3\}$, such that

	\begin{equation*}\label{eq: prop check 1}
	F_r(\bar{Y},\bar{Z})-\bar{\delta}_r= F_r(Y^{*}, Z^{*}),\quad r\in\{1,2,3\}.
	\end{equation*}
	Since  $(Y^{*}, Z^{*})$ is feasible for $P(Y,Z)$, it follows that
	\begin{equation*}\label{eq: prop check 3}
	F_r(\bar{Y},\bar{Z})-\bar{\delta}_r-\delta^*_r= F_r(Y^{*}, Z^{*})-\delta^*_r= F_r(Y, Z) ,\quad r\in\{1,2,3\}.
	\end{equation*}
	Define $\hat{\delta}=\delta^*+\bar{\delta}$. Then, $\hat{\delta}\geqq\delta^*$ with $\hat{\delta}_{r_0}>\delta^*_{r_0}$. Therefore, $(\hat{\delta},\bar{Y},\bar{Z})$ is feasible for $P(Y,Z)$ with $\sum_{r=1}^{3}\hat{\delta}_r>\sum_{r=1}^{3}\delta^*_r$, contradicting that $(\delta^*, Y^{*}, Z^{*})$ is optimal for $P(Y,Z)$.
\end{proof}
Combining the above proposition and Theorem \ref{Characterizarion FMCLP and MMCLP} we obtain the following result.
\begin{corollary}\label{cor: fuzzy Pareto solution}
		Let  $(Y, Z)$ be a weakly Pareto solution of MMCPL and $(\delta^*, Y^{*}, Z^{*})$ an optimal solution to $P(Y, Z)$. If $\delta^*_r>0$, for some $r\in\{1,2,3\}$, then $(\tilde{Y}^{*}, \tilde{Z}^{*})\in \{0, 1\}^{|J|}\times(\T)^{|I|}$ is a fuzzy Pareto solution of FMCLP.
\end{corollary}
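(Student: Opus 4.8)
The plan is to obtain the corollary as an immediate consequence of the proposition stated just above it, chained with the characterization in Theorem~\ref{Characterizarion FMCLP and MMCLP}; no new argument is needed beyond checking that the hypotheses line up and that the crisp-to-fuzzy correspondence is applied correctly.

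First I would observe that the hypotheses of the corollary---that $(Y,Z)$ is a weakly Pareto solution of MMCLP, that $(\delta^*, Y^*, Z^*)$ is an optimal solution of $P(Y,Z)$, and that $\delta_r^*>0$ for some $r\in\{1,2,3\}$---coincide exactly with those of the preceding proposition. Applying that proposition therefore yields at once that $(Y^*, Z^*)\in\mathcal{D}$ is a genuine Pareto solution of MMCLP, not merely a weakly Pareto one.

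Next I would invoke Theorem~\ref{Characterizarion FMCLP and MMCLP}, which establishes the equivalence between Pareto solutions $(y,z)$ of MMCLP and fuzzy Pareto solutions $(y,\tilde z)$ of FMCLP. Using the ``Pareto of MMCLP implies fuzzy Pareto of FMCLP'' direction, the solution $(Y^*, Z^*)$ lifts to a fuzzy Pareto solution of FMCLP. In this lifting the binary location vector $Y^*$ is unchanged---written $\tilde Y^*$ only because it is now regarded as a vector of crisp fuzzy numbers---while each crisp coverage value $Z_i^*$ is identified with the degenerate triangular fuzzy number $(Z_i^*, Z_i^*, Z_i^*)$, which is precisely the reduction legitimized by Proposition~\ref{propo: Fuzzy}.

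Since the argument is essentially a bookkeeping composition of two already-established facts, I do not expect a genuine technical obstacle; the one point deserving care is exactly this identification between the crisp vector $Z^*$ and its fuzzy representative $\tilde Z^*$. I would make explicit that Theorem~\ref{Characterizarion FMCLP and MMCLP} already encodes this correspondence, so that its conclusion, read in the fuzzy variables, states verbatim that $(\tilde Y^*, \tilde Z^*)\in\{0,1\}^{|J|}\times(\T)^{|I|}$ is a fuzzy Pareto solution of FMCLP, which is the desired claim.
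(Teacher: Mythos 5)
Your proposal is correct and matches the paper exactly: the paper gives no separate proof of this corollary, stating only that it follows by ``combining the above proposition and Theorem~\ref{Characterizarion FMCLP and MMCLP}'', which is precisely the two-step composition you describe. Your additional remark making explicit the identification of the crisp vector $Z^*$ with its degenerate fuzzy representative $\tilde Z^*$ is a welcome clarification of a point the paper leaves implicit.
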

The above results can be exploited to derive a solution algorithm for generating fuzzy Pareto solutions for FMCLP. In particular, given a set of $(\lambda,\rho)$-weights, $\Lambda \subseteq \R^4_+$, we propose the procedure described in the pseudocode of Algorithm \ref{alg}.

\begin{algorithm}[h]
  \SetAlgoLined
  \KwData{$\tilde{f}_j, \tilde{R}_j, \tilde{w}_i, \tilde{d}_{ij} \in \T_+$, for all $i\in I$, $j\in J$, $\tilde{B} \in \T_+$, $\Lambda \in \R^4_+$.}

  $\mathcal{P} = \emptyset$.\\

Compute the ideal point: $F^I=(F^{I}_{1},F^{I}_{2},F^{I}_{3})$.\\
\For{$(\lambda, \rho) \in \Lambda$}{
    Solve ModMMCLP: $(Y,Z)$ and $F=(F_1, F_2, F_3)$.\\
    \If{$F=F^I$}{$\mathcal{P} \leftarrow \mathcal{P} \cup \{(Y,  Z)\}$\\ Terminate}
    \eIf{$(\lambda,\rho) >0$}{
    $\mathcal{P} \leftarrow \mathcal{P} \cup \{(Y,Z)\}$.
    }{
    Solve $P(Y,Z)$: $(\bar \delta, \bar Y, \bar Z)$.\\
    $\mathcal{P} \leftarrow \mathcal{P} \cup \{(\bar Y, \bar Z)\}$
    }}

      \KwResult{$\mathcal{P}$.}
  \caption{Solution algorithm to generate Pareto solutions for FMCLP.\label{alg}}
\end{algorithm}
In the pseudocode, $\mathcal{P}$ denotes our solution set, and $F^I$ the ideal point, which is first computed and becomes our reference point in the criterion space to construct feasible solutions as close as possible to it. For each combination of $(\lambda,\rho)$-weights in $\Lambda$, ModMMCLP is solved. In case all the $(\lambda,\rho)$-weights are strictly positive, we are done, and the solution is added to the set $\mathcal{P}$. Otherwise, a weakly Pareto solution, $(Y,Z)$, is obtained. Since $(Y,Z)$ is not guaranteed  to be a fuzzy Pareto solution for FMCLP, the test $P(Y,Z)$  is performed. After solving $P(Y,Z)$, either $(Y,Z)$ is proved to be a fuzzy Pareto solution or the new solution produced by $P(Y,Z)$ is guaranteed to be fuzzy Pareto (Corollary \ref{cor: fuzzy Pareto solution}). After executing the algorithm for the considered set of weights we obtain a set of fuzzy Pareto solutions for FMCLP with cardinality at most $|\Lambda|$. Note that in case the ideal point is attainable, it will be found in the first iteration of the {\tt{for}} loop and we are done, since the set of solutions, $\mathcal{P}$ is just the singleton composed by such an ideal point.

\section{Computational Experiments}\label{sec:compu}

In this section we describe our computational experience and summarize the obtained numerical results. We have performed a series of experiments on a set of  maximal covering location benchmark instances widely used in the literature. In particular, we consider the datasets in  \cite{lorena} and  \cite{ms98} with $30$, $324$, $402$, $500$, $708$ and $818$ points (\url{http://www.lac.inpe.br/\%7Elorena/instancias.html}). We run two different classes of experiments on these instances. First, using the original information, in which the set-up costs $f$ were fixed to one, so the budget constraints reduce to cardinality constraints, and the budgets $B$ ranging in $\{2,3,4,5,6,7,8,9,10, 15, 20\}$. Second, randomly generating $f$ from an independent normal distribution with mean $100$ and standard deviation $10$. $B$ is defined as the sum of the $p$ smallest $f_i$ values,  for values of $p$ ranging in $\{2,3,4,5,6,7,8,9,10, 15, 20\}$. This last choice allows us to test the model over general budget constraints. The triangular fuzzy numbers for the parameters in the instances were generated such that the central point of each triangular fuzzy number coincides with the original (crisp) value in the reference instance and the lower and upper extremes were randomly generated. In particular, the fuzzy numbers, $(a^-,a,a^+)$, were obtained such that $a^-$ and and $a^+$ were drawn from uniform distributions in $U[0.80a,a]$ and $U[a,1.20a]$, respectively. We generated five instances for each of the datasets. The parameters for each of the instances as well as the detailed results are available at \url{http://bit.ly/FMCInstances}.

For each instance in our testbed we have applied Algorithm \ref{alg} with nine different choices of $(\lambda,\rho)$-weights $\Lambda = \{(0,0,0,1)$, $(1, 0, 0, 0)$, $(0, 1, 0, 0)$, $(0, 0, 1, 0)$, $(1,1,1,0.001)$, $(1,1,1,0)$, $(1,1,0, 0.001)$, $(1,0,1, 0.001)$, $(0,1,1,0.001)\}$. Hence, for each combination $(n,B)$, we have tested 45 runs, i.e., nine choices of $\Lambda$ for each of the  five random instances with the combination $(n,B)$.

All the formulations were coded in Python, and solved using Gurobi  8.0 \cite{gurobi} in a Mac OSX Mojave with an Intel Core i7 processor at 3.3 GHz and 16GB of RAM.

In Tables \ref{table:1} and \ref{table:2} we report the following information referring to each group of 45 instances with fixed $n$ ($5$ randomly generated instances and $9$ $(\lambda,\rho)$-weights):
\begin{itemize}
\item CPUTime: Average computing time of Algorithm \ref{alg}, over the 45 instances in each group.
\item CPUTime-Crisp: Average computing time for solving the crisp version of the problem (over the 45 instances in each group).
\item Different Pareto: Average number of different Pareto solutions produced by Algorithm \ref{alg}  (over the 45 instances in the group).
\item CheckPareto: \% of instances (out of the 45 in each group) in which the CheckPareto produces a solution different from the one previously obtained.
\item ReachIdeal: \% of instances (out of the 45 in each group) for which the ideal point was attained.
\end{itemize}

One can observe that the required times for solving each of the instances of the fuzzy models are slightly larger than those needed to solve the crisp model. We were able to solve all the instances in small  computing CPU times. The most time consuming instance (with $n=818$ and $p=7$) was solved in $102$ seconds. As expected, these times increase with the size of the instances. We found that in most of the instances the ideal point is attained. In particular, in $74\%$ of the instances of cardinality-constrained problem and $70\%$ of the budget-constrained problem, the ideal point was found. These percentages are quite similar for the different sizes and range in $[50\%, 80\%]$ for all values of $n$. 

Also, the test to check Pareto optimality of solutions does not produce, except in a few cases, a new solution, corroborating the Pareto-optimality of the obtained solutions. Although, in principle, it could be possible to find up to nine different Pareto solutions per instance (one for each combination of weights), the average number of different solutions ranges in $[2,3]$, often obtaining the same solutions associated with different sets of weights. We also observed that the solutions obtained when solving ModMMCPL are in most of the cases fuzzy Pareto solutions, being the outcome of the test for Pareto optimality, $P(Y,Z)$, negative in the $99.5\%$ of the solved problems. This fact confirms the good quality, relative to the fuzzy problem, of the solutions produced by ModMMCPL.\\

In Tables \ref{table:3} and \ref{table:4} we report  the following information concerning the solutions produced by the fuzzy and the crisp models, for the same set of instances. We provide:
\begin{itemize}
\item \%CoveredCrisp: Average percentage of covered demand with the crisp model.
\item \%CoveredFuzzy$^-$/\%CoveredFuzzy/\%CoveredFuzzy$^+$: Average lower bound/center/upper bound on the covered demand with the fuzzy model.
\item \#OpenCrisp: Number of open plants in the crisp model. In the cardinality-constrained model, this amount coincides with $p$, so it is omitted.
\item \#OpenFuzzy: Number of open plants in the fuzzy model.

\end{itemize}

Observe that the triplets (\%CoveredFuzzy$^-$, \%CoveredFuzzy, \%CoveredFuzzy$^+$) correspond to the triangular fuzzy numbers of the obtained solutions relative to the total demand.

\begin{table}[h]
\begin{center}
{\scriptsize \begin{tabular}{|r|r|rr|rrr|}\hline
$n$ & $B$ & CPUTime & CPUTime-Crisp & Different Pareto & CheckPareto & ReachIdeal\\\hline
\hline
\multirow{11}{*}{30} & 2    &         0.0067    &         0.2601    &             1.20    & 0.00\% & 31\%\\
& 3    &         0.0049    &         0.0014    &             1.20    & 0.00\% & 31\%\\
& 4    &         0.0047    &         0.0012    &             1.00    & 0.00\% & 100\%\\
& 5    &         0.0047    &         0.0012    &             1.00    & 0.00\% & 100\%\\
& 6    &         0.0048    &         0.0012    &             1.00    & 0.00\% & 100\%\\
& 7    &         0.0049    &         0.0012    &             1.60    & 0.00\% & 14\%\\
& 8    &         0.0051    &         0.0012    &             1.60    & 0.00\% & 14\%\\
& 9    &         0.0049    &         0.0013    &             1.00    & 0.00\% & 100\%\\
& 10   &         0.0053    &         0.0013    &             1.40    & 0.00\% & 14\%\\
& 15   &         0.0042    &         0.0012    &             1.20    & 0.00\% & 31\%\\
& 20   &         0.0017    &         0.0003    &             1.00    & 0.00\% & 100\%\\
\hline
\multirow{11}{*}{324} & 2    &         0.1235    &         0.0203    &             1.00    & 0.00\% & 100\%\\
& 3    &         0.1128    &         0.0260    &             1.20    & 0.00\% & 31\%\\
& 4    &         0.1250    &         0.0252    &             1.60    & 0.00\% & 7\%\\
& 5    &         0.1630    &         0.0275    &             1.40    & 0.00\% & 14\%\\
& 6    &         0.1351    &         0.0268    &             1.20    & 0.00\% & 31\%\\
& 7    &         0.3242    &         0.0266    &             1.60    & 0.00\% & 14\%\\
& 8    &         0.1016    &         0.0258    &             1.20    & 0.00\% & 31\%\\
& 9    &         0.1021    &         0.0273    &             1.40    & 0.00\% & 14\%\\
& 10   &         0.2268    &         0.0272    &             1.40    & 0.00\% & 14\%\\
& 15   &         0.2303    &         0.0593    &             1.40    & 0.00\% & 14\%\\
& 20   &         0.6160    &         0.1205    &             1.40    & 0.00\% & 14\%\\
\hline
\multirow{11}{*}{402} & 2    &         0.1807    &         0.0250    &             1.40    & 0.00\% & 14\%\\
& 3    &         0.3971    &         0.0303    &             1.40    & 0.00\% & 14\%\\
& 4    &         0.5470    &         0.0333    &             1.60    & 0.00\% & 7\%\\
& 5    &         0.7275    &         0.0353    &             2.00    & 0.00\% & 7\%\\
& 6    &         1.1120    &         0.0341    &             1.80    & 0.00\% & 14\%\\
& 7    &         2.0108    &         0.0408    &             1.40    & 0.00\% & 31\%\\
& 8    &         1.1455    &         0.0367    &             1.60    & 0.00\% & 31\%\\
& 9    &         0.5801    &         0.0353    &             1.40    & 0.00\% & 14\%\\
& 10   &         0.7202    &         0.0368    &             1.20    & 0.00\% & 31\%\\
& 15   &         0.5010    &         0.0397    &             1.40    & 0.00\% & 14\%\\
& 20   &         1.9336    &         0.1036    &             2.20    & 0.00\% & 3\%\\
\hline
\multirow{11}{*}{500} & 2    &         0.3164    &         0.0289    &             1.60    & 0.00\% & 14\%\\
& 3    &         0.6573    &         0.0407    &             1.40    & 0.00\% & 14\%\\
& 4    &         0.3296    &         0.0396    &             1.00    & 0.00\% & 100\%\\
& 5    &         1.5159    &         0.0439    &             1.20    & 0.00\% & 31\%\\
& 6    &         0.2433    &         0.0436    &             1.00    & 0.00\% & 100\%\\
& 7    &         0.1892    &         0.0418    &             1.20    & 0.00\% & 31\%\\
& 8    &         0.1467    &         0.0434    &             1.40    & 0.00\% & 14\%\\
& 9    &         0.8045    &         0.0458    &             2.00    & 0.00\% & 7\%\\
& 10   &         0.5348    &         0.0557    &             1.40    & 0.00\% & 14\%\\
& 15   &         0.7455    &         0.0480    &             2.60    & 0.00\% & 3\%\\
& 20   &         3.2028    &         0.0650    &             2.20    & 0.00\% & 3\%\\
\hline
\multirow{11}{*}{708} & 2    &         0.9872    &         0.2452    &             1.20    & 0.00\% & 31\%\\
& 3    &         0.9582    &         0.2590    &             1.20    & 0.00\% & 31\%\\
& 4    &         0.8131    &         0.2700    &             1.00    & 0.00\% & 100\%\\
& 5    &         1.5395    &         0.3129    &             1.00    & 0.00\% & 100\%\\
& 6    &         2.8213    &         0.2791    &             1.00    & 0.00\% & 100\%\\
& 7    &         2.5991    &         0.5398    &             1.40    & 0.00\% & 14\%\\
& 8    &         1.6095    &         0.2723    &             1.00    & 0.00\% & 100\%\\
& 9    &         3.1732    &         0.2710    &             1.20    & 0.00\% & 31\%\\
& 10   &         1.5106    &         0.3644    &             1.00    & 0.00\% & 100\%\\
& 15   &         0.7631    &         0.1691    &             1.00    & 0.00\% & 100\%\\
& 20   &         0.3489    &         0.1693    &             1.00    & 30.77\% & 31\%\\
\hline
\multirow{11}{*}{818} & 2    &         1.3494    &         0.3060    &             1.20    & 0.00\% & 31\%\\
& 3    &       18.5982    &         0.2882    &             1.40    & 0.00\% & 14\%\\
& 4    &       23.5930    &         0.3012    &             1.60    & 0.00\% & 31\%\\
& 5    &         7.4189    &         0.3348    &             1.00    & 0.00\% & 100\%\\
& 6    &         3.2848    &         0.4409    &             1.20    & 0.00\% & 31\%\\
& 7    &         3.3584    &         0.3494    &             1.60    & 0.00\% & 14\%\\
& 8    &       30.8488    &         0.3733    &             1.20    & 0.00\% & 31\%\\
& 9    &       19.8565    &         0.3140    &             1.20    & 0.00\% & 31\%\\
& 10   &       34.5108    &         0.9278    &             1.40    & 0.00\% & 31\%\\
& 15   &         4.7518    &         0.2909    &             1.00    & 0.00\% & 100\%\\
& 20   &         0.8214    &         0.1928    &             1.00    & 0.00\% & 100\%\\
\hline
            \end{tabular}}
\end{center}
\caption{Average results for cardinality constrained problems.\label{table:1}}
\end{table}

\begin{table}[h]
\begin{center}
{\scriptsize \begin{tabular}{|r|r|rr|rrr|}\hline
$n$ & $p$ & CPUTime & CPUTime-Crisp & Different Pareto & CheckPareto & ReachIdeal\\\hline
\hline
\multirow{11}{*}{30} & 2     & 0.0026 & 0.0030 & 1.00  & 0.00\% & 100\%\\
& 3     & 0.0073 & 0.0024 & 1.00  & 0.00\% & 100\%\\
& 4     & 0.0068 & 0.0022 & 1.00  & 0.00\% & 100\%\\
& 5     & 0.0064 & 0.0022 & 1.00  & 0.00\% & 100\%\\
& 6     & 0.0058 & 0.0021 & 1.00  & 0.00\% & 100\%\\
& 7     & 0.0054 & 0.0021 & 1.00  & 0.00\% & 100\%\\
& 8     & 0.0053 & 0.0021 & 1.40  & 0.00\% & 14\%\\
& 9     & 0.0069 & 0.0028 & 1.60  & 0.00\% & 7\%\\
& 10    & 0.0051 & 0.0020 & 1.00  & 0.00\% & 100\%\\
& 15    & 0.0074 & 0.0013 & 1.60  & 0.00\% & 7\%\\
& 20    & 0.0020 & 0.0004 & 1.40  & 0.00\% & 14\%\\
\hline
\multirow{11}{*}{324} & 2     & 0.0540 & 0.0300 & 1.00  & 0.00\% & 100\%\\
& 3     & 0.1375 & 0.0511 & 1.20  & 0.00\% & 31\%\\
& 4     & 0.1676 & 0.0604 & 1.40  & 0.00\% & 31\%\\
& 5     & 0.1934 & 0.0561 & 1.40  & 0.00\% & 31\%\\
& 6     & 0.1746 & 0.0389 & 1.60  & 0.00\% & 7\%\\
& 7     & 0.2057 & 0.0332 & 1.60  & 0.00\% & 14\%\\
& 8     & 0.1919 & 0.0412 & 2.00  & 0.00\% & 7\%\\
& 9     & 0.2205 & 0.0462 & 2.20  & 0.00\% & 3\%\\
& 10    & 0.2431 & 0.0587 & 2.00  & 0.00\% & 3\%\\
& 15    & 0.4934 & 0.0659 & 1.80  & 0.00\% & 14\%\\
& 20    & 3.6875 & 0.1615 & 2.20  & 0.00\% & 3\%\\
\hline
\multirow{11}{*}{402} & 2     & 0.0782 & 0.0332 & 1.20  & 0.00\% & 31\%\\
& 3     & 0.1384 & 0.0502 & 1.00  & 0.00\% & 100\%\\
& 4     & 0.3412 & 0.0583 & 1.20  & 0.00\% & 31\%\\
& 5     & 0.4761 & 0.0589 & 1.20  & 0.00\% & 31\%\\
& 6     & 0.2379 & 0.0451 & 1.40  & 0.00\% & 14\%\\
& 7     & 0.2989 & 0.0483 & 1.20  & 0.00\% & 31\%\\
& 8     & 0.2809 & 0.0685 & 1.20  & 0.00\% & 14\%\\
& 9     & 0.1648 & 0.0409 & 1.20  & 0.00\% & 31\%\\
& 10    & 0.2939 & 0.0410 & 1.40  & 0.00\% & 14\%\\
& 15    & 0.6319 & 0.0483 & 2.00  & 0.00\% & 3\%\\
& 20    & 0.4941 & 0.0867 & 1.80  & 0.00\% & 14\%\\
\hline
\multirow{11}{*}{500} & 2     & 0.1286 & 0.0456 & 1.20  & 0.00\% & 31\%\\
& 3     & 0.5093 & 0.0550 & 1.20  & 0.00\% & 31\%\\
& 4     & 0.7750 & 0.0805 & 1.20  & 0.00\% & 31\%\\
& 5     & 0.7108 & 0.1182 & 1.20  & 0.00\% & 31\%\\
& 6     & 0.2592 & 0.0785 & 1.00  & 0.00\% & 100\%\\
& 7     & 0.1792 & 0.0585 & 1.00  & 0.00\% & 100\%\\
& 8     & 0.1882 & 0.0729 & 1.00  & 0.00\% & 100\%\\
& 9     & 0.3244 & 0.0522 & 1.20  & 0.00\% & 31\%\\
& 10    & 0.3196 & 0.0648 & 1.80  & 0.00\% & 3\%\\
& 15    & 0.2469 & 0.0610 & 1.00  & 0.00\% & 100\%\\
& 20    & 0.6113 & 0.0561 & 1.20  & 0.00\% & 31\%\\
\hline
\multirow{11}{*}{708} & 2     & 0.4718 & 0.1827 & 1.00  & 0.00\% & 100\%\\
& 3     & 8.7481 & 0.4953 & 1.00  & 0.00\% & 100\%\\
& 4     & 43.5878 & 0.8488 & 1.00  & 0.00\% & 100\%\\
& 5     & 12.1678 & 0.9080 & 1.00  & 0.00\% & 100\%\\
& 6     & 11.4501 & 1.0463 & 1.00  & 0.00\% & 100\%\\
& 7     & 20.8775 & 1.9400 & 1.00  & 0.00\% & 31\%\\
& 8     & 6.3216 & 2.1380 & 1.00  & 0.00\% & 100\%\\
& 9     & 34.5141 & 2.1440 & 1.40  & 0.00\% & 14\%\\
& 10    & 8.4178 & 2.3144 & 1.20  & 0.00\% & 31\%\\
& 15    & 1.9882 & 0.1593 & 1.20  & 7.69\% & 31\%\\
& 20    & 0.2744 & 0.0929 & 1.00  & 0.00\% & 100\%\\
\hline
\multirow{11}{*}{818} & 2     & 0.5556 & 0.2972 & 1.00  & 0.00\% & 100\%\\
& 3     & 32.1120 & 0.4418 & 1.25  & 0.00\% & 25\%\\
& 4     & 54.7338 & 0.6886 & 1.25  & 0.00\% & 25\%\\
& 5     & 33.4988 & 0.7135 & 1.25  & 0.00\% & 25\%\\
& 6     & 34.0156 & 1.4136 & 2.25  & 0.00\% & 0\%\\
& 7     & 54.7128 & 2.1214 & 1.50  & 0.00\% & 10\%\\
& 8     & 12.9773 & 1.9467 & 1.50  & 0.00\% & 10\%\\
& 9     & 26.0821 & 2.7770 & 1.75  & 0.00\% & 4\%\\
& 10    & 13.0448 & 1.8998 & 1.50  & 0.00\% & 10\%\\
& 15    & 4.1847 & 0.2182 & 1.25  & 20.00\% & 10\%\\
& 20    & 0.5317 & 0.0965 & 1.00  & 0.00\% & 100\%\\
\hline
\end{tabular}}
\end{center}
\caption{Average results for budgeted problems.\label{table:2}}
\end{table}

\begin{table}[h]
\begin{center}
{\scriptsize \begin{tabular}{|c|c|cccc|c|}
\hline
$n$&$B$&  \%CoveredCrisp & \%CoveredFuzzy$^-$ & \%CoveredFuzzy & \%CoveredFuzzy$^+$ & \#OpenFuzzy  \\
\hline\hline
\multirow{11}{*}{30} & 2    & 66.18\% & 46.67\% & 52.19\% & 58.07\% & 1.3\\
& 3    & 71.85\% & 62.01\% & 68.16\% & 75.80\% & 2.9\\
& 4    & 76.05\% & 66.22\% & 73.09\% & 80.62\% & 3.8\\
& 5    & 79.34\% & 69.34\% & 76.56\% & 84.49\% & 4.8\\
& 6    & 82.08\% & 72.00\% & 79.56\% & 87.79\% & 5.8\\
& 7    & 84.28\% & 75.35\% & 83.33\% & 91.83\% & 7.0\\
& 8    & 86.47\% & 75.78\% & 84.68\% & 93.54\% & 8.0\\
& 9    & 88.48\% & 78.34\% & 86.40\% & 95.25\% & 8.8\\
& 10   & 90.31\% & 81.52\% & 88.30\% & 97.62\% & 10.0\\
& 15   & 97.81\% & 85.66\% & 96.93\% & 106.46\% & 14.8\\
& 20   & 100.00\% & 90.53\% & 100.00\% & 109.96\% & 18.2\\
\hline
\multirow{11}{*}{324} & 2    & 21.71\% & 17.93\% & 20.08\% & 21.76\% & 2.0\\
& 3    & 28.77\% & 23.65\% & 26.54\% & 29.04\% & 3.0\\
& 4    & 35.30\% & 29.48\% & 33.13\% & 35.96\% & 4.0\\
& 5    & 41.54\% & 34.28\% & 38.51\% & 41.88\% & 5.0\\
& 6    & 47.74\% & 39.17\% & 43.48\% & 47.95\% & 6.0\\
& 7    & 52.83\% & 43.69\% & 48.62\% & 53.10\% & 7.0\\
& 8    & 57.54\% & 48.79\% & 54.35\% & 59.56\% & 8.0\\
& 9    & 61.92\% & 51.77\% & 58.01\% & 63.33\% & 9.0\\
& 10   & 66.00\% & 55.19\% & 61.88\% & 67.38\% & 10.0\\
& 15   & 82.64\% & 67.82\% & 75.63\% & 82.92\% & 15.0\\
& 20   & 93.46\% & 78.49\% & 87.71\% & 95.83\% & 20.0\\
\hline
\multirow{11}{*}{402} & 2    & 19.39\% & 12.45\% & 13.64\% & 15.33\% & 1.6\\
& 3    & 27.61\% & 20.01\% & 22.13\% & 24.81\% & 3.0\\
& 4    & 34.23\% & 26.54\% & 29.13\% & 32.66\% & 3.7\\
& 5    & 39.60\% & 31.30\% & 34.58\% & 38.59\% & 4.6\\
& 6    & 44.96\% & 35.88\% & 39.36\% & 43.73\% & 5.5\\
& 7    & 49.70\% & 38.93\% & 42.64\% & 47.34\% & 6.2\\
& 8    & 54.41\% & 43.00\% & 47.24\% & 52.38\% & 7.2\\
& 9    & 58.28\% & 48.51\% & 53.39\% & 59.05\% & 8.5\\
& 10   & 61.86\% & 48.90\% & 54.70\% & 60.32\% & 9.2\\
& 15   & 77.01\% & 66.07\% & 72.83\% & 80.42\% & 14.5\\
& 20   & 87.86\% & 73.63\% & 81.83\% & 90.55\% & 19.2\\
\hline
\multirow{11}{*}{500} & 2    & 15.73\% & 11.80\% & 13.28\% & 14.69\% & 1.9\\
& 3    & 22.07\% & 15.82\% & 17.76\% & 19.53\% & 2.5\\
& 4    & 28.37\% & 22.41\% & 24.88\% & 27.60\% & 3.6\\
& 5    & 33.40\% & 25.11\% & 28.06\% & 30.97\% & 4.2\\
& 6    & 37.74\% & 31.47\% & 35.15\% & 38.79\% & 5.6\\
& 7    & 41.92\% & 37.23\% & 41.01\% & 45.16\% & 6.8\\
& 8    & 46.06\% & 40.14\% & 44.64\% & 49.13\% & 7.9\\
& 9    & 49.77\% & 41.69\% & 46.55\% & 51.49\% & 8.4\\
& 10   & 52.95\% & 45.18\% & 50.20\% & 55.29\% & 9.5\\
& 15   & 66.28\% & 56.09\% & 62.44\% & 68.92\% & 14.5\\
& 20   & 76.51\% & 65.05\% & 72.42\% & 79.89\% & 19.3\\
\hline
\multirow{11}{*}{708} & 2    & 52.21\% & 42.73\% & 47.24\% & 51.97\% & 2.0\\
& 3    & 67.25\% & 59.35\% & 65.70\% & 72.51\% & 3.0\\
& 4    & 79.46\% & 69.41\% & 76.74\% & 84.33\% & 4.0\\
& 5    & 85.87\% & 75.21\% & 83.20\% & 91.49\% & 5.0\\
& 6    & 90.15\% & 79.50\% & 87.94\% & 96.70\% & 6.0\\
& 7    & 93.35\% & 83.74\% & 92.53\% & 102.05\% & 7.0\\
& 8    & 96.40\% & 85.77\% & 94.91\% & 104.39\% & 8.0\\
& 9    & 98.29\% & 88.32\% & 97.69\% & 107.85\% & 9.0\\
& 10   & 99.66\% & 89.41\% & 98.95\% & 108.85\% & 10.0\\
& 15   & 100.00\% & 90.35\% & 100.00\% & 110.02\% & 14.6\\
& 20   & 100.00\% & 90.40\% & 100.00\% & 110.41\% & 19.1\\
\hline
\multirow{11}{*}{818} & 2    & 43.30\% & 33.55\% & 37.19\% & 41.25\% & 2.0\\
& 3    & 57.27\% & 46.32\% & 51.50\% & 56.46\% & 3.0\\
& 4    & 69.75\% & 56.62\% & 62.89\% & 69.67\% & 4.0\\
& 5    & 79.88\% & 68.00\% & 75.38\% & 82.96\% & 5.0\\
& 6    & 84.54\% & 73.35\% & 81.15\% & 89.67\% & 6.0\\
& 7    & 89.15\% & 78.51\% & 86.77\% & 95.55\% & 7.0\\
& 8    & 92.70\% & 79.27\% & 87.99\% & 97.25\% & 8.0\\
& 9    & 95.68\% & 82.73\% & 92.09\% & 101.11\% & 9.0\\
& 10   & 97.38\% & 84.88\% & 94.20\% & 104.13\% & 9.3\\
& 15   & 100.00\% & 90.05\% & 99.92\% & 109.92\% & 14.6\\
& 20   & 100.00\% & 90.12\% & 100.00\% & 110.01\% & 19.2\\\hline
\end{tabular}}
\end{center}
\caption{Average coverage results for cardinality-constrained problems.\label{table:3}}
\end{table}

\begin{table}[h]
\begin{center}
{\scriptsize \begin{tabular}{|c|c|cccc|cc|}
\hline
$n$&$p$&  \%CoveredCrisp & \%CoveredFuzzy$^-$ & \%CoveredFuzzy & \%CoveredFuzzy$^+$ & \#OpenCrisp & \#OpenFuzzy  \\
\hline\hline
\multirow{11}{*}{30} & 2     & 58.72\% & 48.40\% & 54.04\% & 58.40\% & 1.6   & 1.4\\
& 3     & 67.82\% & 57.13\% & 63.77\% & 69.03\% & 2.4   & 2.2\\
& 4     & 73.20\% & 62.22\% & 69.51\% & 75.20\% & 3.6   & 3.0\\
& 5     & 77.00\% & 65.98\% & 73.71\% & 79.83\% & 4.6   & 4.0\\
& 6     & 79.78\% & 69.02\% & 77.00\% & 83.34\% & 5.2   & 5.0\\
& 7     & 82.41\% & 71.57\% & 79.89\% & 86.41\% & 6.2   & 6.0\\
& 8     & 84.35\% & 74.49\% & 82.97\% & 89.54\% & 7.0   & 7.0\\
& 9     & 86.70\% & 75.85\% & 84.01\% & 91.29\% & 8.3   & 8.0\\
& 10    & 88.48\% & 77.43\% & 86.58\% & 93.70\% & 9.0   & 9.0\\
& 15    & 96.53\% & 85.45\% & 94.64\% & 102.19\% & 14.0  & 14.0\\
& 20    & 100.00\% & 87.52\% & 99.06\% & 107.74\% & 18.0  & 17.6\\
\hline
\multirow{11}{*}{324} & 2     & 14.00\% & 11.73\% & 12.99\% & 14.27\% & 1.4   & 1.0\\
& 3     & 22.98\% & 19.09\% & 21.13\% & 23.04\% & 2.8   & 2.0\\
& 4     & 28.82\% & 25.38\% & 28.13\% & 30.79\% & 3.7   & 3.0\\
& 5     & 35.30\% & 31.10\% & 34.61\% & 38.19\% & 4.0   & 4.0\\
& 6     & 41.54\% & 35.74\% & 40.20\% & 44.38\% & 5.0   & 5.0\\
& 7     & 47.74\% & 41.65\% & 46.58\% & 51.50\% & 6.0   & 6.0\\
& 8     & 52.83\% & 45.94\% & 51.22\% & 56.46\% & 7.0   & 7.0\\
& 9     & 57.54\% & 49.56\% & 55.29\% & 61.03\% & 8.0   & 8.0\\
& 10    & 61.92\% & 53.45\% & 59.55\% & 65.75\% & 9.0   & 9.0\\
& 15    & 79.93\% & 69.19\% & 76.58\% & 84.74\% & 14.0  & 14.0\\
& 20    & 91.50\% & 78.52\% & 87.55\% & 96.55\% & 19.0  & 18.3\\
\hline
\multirow{11}{*}{402} & 2     & 13.02\% & 9.05\% & 10.24\% & 11.31\% & 1.9   & 1.1\\
& 3     & 20.05\% & 16.82\% & 18.53\% & 20.41\% & 2.4   & 2.0\\
& 4     & 28.08\% & 23.17\% & 25.91\% & 28.31\% & 3.7   & 3.0\\
& 5     & 35.63\% & 28.45\% & 31.74\% & 34.68\% & 4.7   & 4.0\\
& 6     & 39.62\% & 33.68\% & 37.26\% & 41.04\% & 5.0   & 5.0\\
& 7     & 45.02\% & 37.07\% & 40.67\% & 44.89\% & 6.1   & 6.0\\
& 8     & 49.70\% & 41.08\% & 45.36\% & 49.99\% & 7.0   & 7.0\\
& 9     & 54.41\% & 44.43\% & 49.24\% & 54.22\% & 8.0   & 8.0\\
& 10    & 58.28\% & 49.44\% & 54.70\% & 60.21\% & 9.0   & 9.0\\
& 15    & 74.29\% & 62.30\% & 68.87\% & 75.95\% & 14.0  & 14.0\\
& 20    & 86.13\% & 71.87\% & 79.54\% & 87.51\% & 19.0  & 19.0\\
\hline
\multirow{11}{*}{500} & 2     & 12.08\% & 7.66\% & 8.34\% & 8.93\% & 2.0   & 1.2\\
& 3     & 18.84\% & 13.90\% & 15.15\% & 16.67\% & 2.9   & 2.0\\
& 4     & 23.75\% & 19.02\% & 20.89\% & 23.00\% & 3.8   & 3.0\\
& 5     & 28.37\% & 23.90\% & 26.32\% & 29.12\% & 4.0   & 4.0\\
& 6     & 33.40\% & 28.88\% & 31.84\% & 35.18\% & 5.0   & 5.0\\
& 7     & 37.74\% & 33.12\% & 36.58\% & 40.55\% & 6.0   & 6.0\\
& 8     & 41.92\% & 36.66\% & 40.58\% & 44.95\% & 7.0   & 7.0\\
& 9     & 46.06\% & 39.52\% & 43.73\% & 48.51\% & 8.0   & 8.0\\
& 10    & 49.77\% & 42.91\% & 47.49\% & 52.47\% & 9.0   & 9.0\\
& 15    & 63.83\% & 54.42\% & 60.21\% & 66.59\% & 14.0  & 14.0\\
& 20    & 74.82\% & 64.52\% & 71.82\% & 79.12\% & 19.0  & 18.9\\
\hline
\multirow{11}{*}{708} & 2     & 48.16\% & 38.25\% & 42.22\% & 46.55\% & 2.0   & 2.0\\
& 3     & 62.08\% & 50.71\% & 56.07\% & 61.75\% & 3.0   & 3.0\\
& 4     & 73.83\% & 60.07\% & 66.49\% & 73.22\% & 4.0   & 3.8\\
& 5     & 81.76\% & 68.54\% & 75.96\% & 83.53\% & 4.8   & 4.6\\
& 6     & 86.69\% & 74.14\% & 82.19\% & 90.42\% & 5.6   & 5.2\\
& 7     & 90.45\% & 80.11\% & 88.38\% & 97.26\% & 6.8   & 6.7\\
& 8     & 93.49\% & 81.54\% & 90.44\% & 99.51\% & 7.2   & 7.0\\
& 9     & 96.40\% & 84.49\% & 93.54\% & 103.08\% & 8.0   & 8.0\\
& 10    & 98.29\% & 87.26\% & 96.80\% & 106.28\% & 9.0   & 9.0\\
& 15    & 100.00\% & 90.56\% & 100.00\% & 110.04\% & 14.0  & 14.0\\
& 20    & 100.00\% & 90.14\% & 100.00\% & 110.24\% & 18.0  & 18.8\\
\hline
\multirow{11}{*}{818} & 2     & 37.08\% & 27.46\% & 30.59\% & 33.57\% & 1.8   & 1.5\\
& 3     & 51.64\% & 41.94\% & 46.81\% & 51.72\% & 2.9   & 2.8\\
& 4     & 64.17\% & 49.68\% & 55.60\% & 61.06\% & 3.9   & 3.2\\
& 5     & 74.40\% & 59.90\% & 66.91\% & 73.58\% & 4.9   & 4.1\\
& 6     & 80.09\% & 68.74\% & 76.58\% & 84.24\% & 5.5   & 5.0\\
& 7     & 84.94\% & 74.48\% & 83.04\% & 91.58\% & 6.5   & 6.0\\
& 8     & 89.15\% & 78.60\% & 87.65\% & 96.60\% & 7.0   & 7.0\\
& 9     & 92.72\% & 81.12\% & 90.26\% & 99.34\% & 8.3   & 8.0\\
& 10    & 95.68\% & 84.69\% & 94.43\% & 104.11\% & 9.0   & 9.0\\
& 15    & 100.00\% & 89.53\% & 99.71\% & 109.46\% & 14.0  & 14.0\\
& 20    & 100.00\% & 89.76\% & 100.00\% & 110.00\% & 19.0  & 18.5\\\hline
\end{tabular}}
\end{center}
\caption{Average coverage results for budgeted problems.\label{table:4}}
\end{table}

The results of Tables \ref{table:3} and \ref{table:4} indicate that the covered demands and the number of open plants are quite similar in the cardinality constrained and budget constrained models. Nevertheless, the results of these tables also indicate that the fuzzy models are more restrictive than the crisp ones, both in terms of the covered demand and the number of open plants. As explained when discussing Example 1, this is due to the definition of the coverage sets $\tilde J$ as well as to the three cardinality/budget constraints. We must however recall that the fuzzy model is more general than the crisp one, as it deals with imprecise knowledge of all data involved.

\section{Conclusions}\label{sec:conclu}
In this paper we propose a new and general model for the maximal covering location with imprecise knowledge on all data, by means of fuzzy numbers and variables. The properties of the proposed model allow to formulate it with an equivalent mixed-binary linear multiobjective programme. For obtaining fuzzy Pareto solutions we propose a solution algorithm based on an augmented weighted Tchebycheff method. The 
 effectiveness of the proposed solution algorithm has been confirmed by extensive computational experiments whose numerical results are presented and analyzed.\\
Promising avenues for future research include the study from a fuzzy perspective of other classical discrete location models, like the plant location problem. From a different point of view, further insight for dealing with imprecise knowledge with these types of problems can be derived from the study of more sophisticated fuzzy sets like, for instance, intuitionistic numbers.\\

\section*{Acknowledgments}

This work was supported by the Spanish Ministry of Economy and Competitiveness through MINECO/FEDER grants MTM2017-89577-P, MTM2015-63779-R, MTM2016-
74983-C2-1-R. This support is gratefully acknowledged.

\end{document}